\documentclass[11pt]{amsart}

\usepackage{amscd,amsfonts,amsmath,amsthm,anyfontsize,bigints,blindtext,centernot,cite,colortbl,enumitem,extarrows,fancyhdr,filecontents,float,footmisc,galois,geometry,hyperref,imakeidx,lastpage,lipsum,lmodern,marginnote,mathrsfs,mathtools,MnSymbol,pgfplots,spectralsequences,stmaryrd,subcaption,thmtools,thm-restate,tikz,tikz-cd,titlecaps,titlesec,xcolor}
\usepackage[utf8]{inputenc}
\usepackage[T1]{fontenc}
\geometry{letterpaper,margin=1.2in} 
\makeindex[columns=1,title=Index] 

\hypersetup{
    colorlinks,
    linkcolor={red!50!black},
    citecolor={blue!70!black},
    urlcolor={green!80!black}
}
\usetikzlibrary{positioning} 

\newlist{subitems}{itemize}{1}
\setlist[subitems]{label={--},nosep}
\newlist{myreferences}{enumerate}{1}
\setlist[myreferences]{label={[\arabic*]},leftmargin=*}

\setlength{\parindent}{17pt} 
\setlength{\parskip}{3pt}
\pagestyle{fancy} 
\fancyhf{} 
\chead{}
\lhead{} 
\rhead{} 
\lfoot{}
\rfoot{}
\footskip=25pt
\fancyfoot[c]{\thepage{}} 



\newcommand{\id}{\text{id}} \newcommand{\interior}{\text{int\;}}

 \newcommand{\ima}{\text{im\,}}

\newcommand{\C}{\mathbb{C}} \newcommand{\R}{\mathbb{R}}
 \newcommand{\Z}{\mathbb{Z}}
\newcommand{\N}{\mathbb{N}} 
\newcommand{\T}{\mathbb{T}}
 \newcommand{\sph}{\mathbb{S}} \newcommand{\D}{\mathbb{D}}

 \newcommand{\calO}{\mathcal{O}}
 \newcommand{\cp}{\C P}

\newcommand{\F}{\mathscr{F}} 

\newtheorem{theorem}{Theorem}[section]
\newtheorem{proposition}[theorem]{Proposition}
\newtheorem{definition}[theorem]{Definition}
\newtheorem{corollary}[theorem]{Corollary}
\newtheorem{lemma}[theorem]{Lemma}

\newtheorem{example}[theorem]{Example}
\newtheorem{property}[theorem]{Property}

\theoremstyle{remark}
\newtheorem{remark}[theorem]{Remark}

\titleformat{\section}{\normalfont\normalsize\bfseries}{\thesection}{1em}{}
\titleformat{\subsection}{\normalfont\normalsize}{\thesubsection}{1em}{}
\titleformat{\subsubsection}{\normalfont\normalsize}{\thesubsubsection}{1em}{}
\setcounter{tocdepth}{2}

\begin{filecontents}[overwrite]{bibfile.bib}

@book{b-s,
  title = {Introduction to Dynamical Systems},
  author = {Michael Brin and Garrett Stuck},
  publisher = {Cambridge University Press},
  year = {2002},}

@article{franks1,
  title = {Anosov diffeomorphisms on tori},
  author = {John Franks},
  journal = {Transactions of the American Mathematical Society},
  volume = {145},
  pages = {117--124},
  year = {1969}}

@article{franks,
  title = {Anosov diffeomorphisms},
  author = {John Franks},
  journal = {Global Analysis},
  inproceedings = {Proceedings of Symposia in Pure Mathematics},
  volume = {14},
  pages = {61--94},
  year = {1970}}

@article{GORH,
  title = {New Partially Hyperbolic Dynamical Systems I},
  author = {Andrey Gogolev and Pedro Ontaneda, and Federico Rodriguez-Hertz},
  journal = {Acta Mathematica},
  volume = {215},
  inbook = {no. 2},
  year = {2015},
  pages = {363--393}}

@article{surgery,
  title = {Surgery for partially hyperbolic dynamical systems, I: Blow-ups of invariant submanifolds},
  author = {Andrey Gogolev},
  journal = {Geometry and Topology},
  volume = {22(4)},
  year = {2018},
  pages = {2219-2252}}

@book{husemoller,
  title = {Fibre Bundles},
  author = {Dale Husemoller},
  publisher = {Springer-Verlag New York},
  year = {1994}}

@book{jpmay,
  title = {A concise course in algebraic topology},
  author = {J. Peter May},
  publisher = {The University of Chicago Press},
  year = {1999}}

@book{k-h,
  title = {Introduction to the Modern Theory of Dynamical Systems},
  author = {Anatole Katok and Boris Hasselblatt},
  publisher = {Cambridge University Press},
  year = {1995}}

@book{kobanomi,
  title = {Foundations of Differential Geometry},
  volume = {1},
  author = {Shoshichi Kobayashi and Katsumi Nomizu},
  publisher = {John Wiley and Sons},
  year = {1963}}

@article{manning1,
  title = {There are no new Anosov diffeomorphisms on tori},
  author = {Anthony Manning},
  journal = {American Journal of Mathematics},
  volume = {96},
  year = {1974},
  pages = {422--429}}

@article{palais,
  title = {On the Existence of Slices for Actions of Non-Compact Lie Groups},
  author = {Richard S. Palais},
  inbook = {no. 2},
  journal = {Annals of Mathematics},
  volume = {73},
  year = {1961},
  pages = {295--323}}

@book{steenrod,
  title = {The Topology of Fibre Bundls},
  author = {Norman Steenrod},
  publisher = {Princeton University Press},
  year = {1951}}

@book{scorpan,
  title = {The wild world of 4-manifolds},
  author = {Alexandru Scorpan},
  publisher = {American Mathematical Society},
  year = {2005}}

@article{bgmm,
  title = {Foliated hyperbolicity and foliations with hyperbolic leaves},
  volume = {40},
  DOI = {10.1017/etds.2018.61},
  number = {4}, 
  journal = {Ergodic Theory and Dynamical Systems}, 
  publisher = {Cambridge University Press}, 
  author = {Bonatti, Christian and Gómez-Mont, Xavier and Martínez, Matilde}, 
  year = {2020}, 
  pages = {881–903}}

@article{pugh,
 ISSN = {00029327, 10806377},
 URL = {http://www.jstor.org/stable/2373413},
 author = {Charles C. Pugh},
 journal = {American Journal of Mathematics},
 number = {4},
 pages = {956--1009},
 publisher = {Johns Hopkins University Press},
 title = {The Closing Lemma},
 urldate = {2022-09-19},
 volume = {89},
 year = {1967}
}

@article{f-g,
  author = {Farrell, F. Thomas and Gogolev, Andrey},
  journal = {Mathematische Annalen},
  number = {1},
  pages = {401--438},
  title = {On bundles that admit fiberwise hyperbolic dynamics},
  volume = {364},
  year = {2016}}

@book{Hatcher,
  author = "Hatcher, Allen",
  title = "{Algebraic topology}",
  publisher = "Cambridge Univ. Press",
  address = "Cambridge",
  year = "2000",
  url = "https://cds.cern.ch/record/478079"}

\end{filecontents}

\begin{document}
\noindent\begin{tabular}{@{}p{\linewidth}@{}}
 \centering\LARGE Structural Stability for Fibrewise Anosov Diffeomorphisms on Principal Torus Bundles\\[5pt]
 \centering\large Danyu Zhang\\
\end{tabular}

\thispagestyle{empty}

\begin{abstract} We show a fibre-preserving self-diffeomorphism which has hyperbolic splittings along the fibres on a compact principal torus bundle is topologically conjugate to a map that is linear in the fibres.
\end{abstract}
\makeatletter
\@setabstract
\makeatother

\

\section{Introduction}\label{intro}

Let $M$ be a closed Riemannian manifold. Recall that a diffeomorphism $f:M\to M$ is called \emph{Anosov} if it satisfies the following conditions.
\begin{itemize}
    \item[1.] There is a splitting of the tangent bundle $TM=E^s\oplus E^u$ which is preserved by the derivative $df$.
    \item[2.] There exist constants $C>0$ and $\lambda\in(0,1)$, such that for all $n>0$, we have 
    $$\|df^nv\|\leq C\lambda^n\|v\|\ \ \text{ for all } v\in E^s$$
    and
    $$\|df^{-n}v\|\leq C\lambda^n\|v\|\ \ \text{ for all } v\in E^u.$$
    \end{itemize}

Classification of Anosov diffeomorphisms is a well-known open problem. Examples are only known on tori, nilmanifolds and infranilmanifolds. The very first examples of Anosov diffeomorphisms are hyperbolic automorphisms which are given by hyperbolic matrices in $GL(d,\Z)$ whose action on $\R^d$ descends to the torus $\T^d$. J. Franks \cite{franks1} proved that every Anosov diffeomorphism which is homotopic to a hyperbolic automorphism is, in fact, conjugate to this automorphism. A. Manning \cite{manning1} then completed the classification on tori (and more generally on infranilmanifolds) by proving that every Anosov diffeomorphism on the torus is homotopic to a hyperbolic automorphism. We would usually refer to such a property as the \emph{global structural stability}. More precisely, there is a linear model for every Anosov diffeomorphism. The word ``global'' is in contrast to the local version of structural stability, where we consider whether two diffeomorphisms that are close in $C^r$-topology for some $r\geq 0$ are conjugate.

Now suppose $p:E\to B$ is a $C^1$ principal torus bundle where $E$ and $B$ are smooth compact manifolds. Roughly speaking, $E$ is a topological space endowed with a free action of the torus $\T^d=\R^d/\Z^d$. We will include a more detailed definition and properties of this object in the preliminary section below. Fix a Riemannian metric $\|\cdot\|$ on $E$. Then the tangent space splits $TE=V\oplus H$, where $V=\ker(dp)$ denotes the vertical bundle tangent to the fibres and $H$ the horizontal bundle which is its orthogonal complement. We call a $C^1$ diffeomorphism $F:E\to E$ a \emph{fibrewise Anosov diffeomorphism} if it satisfies the following conditions.
\begin{itemize}
    \item[1.] There exists a splitting of the vertical bundle $V=V^s\oplus V^u$ which is preserved under the derivative $dF$. (In particular $V$ is $dF$-invariant.)
    \item[2.] There exist constants $C$ and $\lambda\in (0,1)$, such that for all $n>0$ we have
\begin{align*} \|dF^n(v^s)\|\leq C\lambda^n\|v^s\|,\ \ \ \text{for all}\ v\in V^s,
\end{align*}
and
\begin{align*} \|dF^{-n}(v^u)\|\leq C\lambda^n\|v^u\|\ \ \ \text{for all}\ v\in V^u.
\end{align*}
\end{itemize}

If $G:E\to E$ is a fibre-preserving map on a principal $\T^d$ bundle that satisfies $G(e.g)=G(e).Ag$, for some $A\in GL(d,\Z)$ and all $e\in E,g\in\T^d$ in the structure group, then we call $G$ an \emph{$A$-map}. In particular, when $A$ is hyperbolic, we call $G$ a \emph{fibrewise affine Anosov diffeomorphism}.

We took this notion of fibrewise hyperbolicity from \cite{f-g} where Farrell and Gogolev mainly studied the bundles that support such dynamics. There is also a more general notion of ``foliated hyperbolicity'' by Bonatti, Gómez-Mont and Martínez that appeared in \cite{bgmm} where the authors studied diffeomorphisms that are hyperbolic along the leaves of a foliated manifold and proved elementary dynamical properties, e.g., the strong stable and unstable distributions are integrable.

We are interested in this class of fibrewise Anosov examples also partially because of the new class of partially hyperbolic systems constructed by Gogolev, Ontaneda and  Rodriguez-Hertz in \cite{GORH} with simply-connected total spaces, although note that here a fibrewise Anosov diffeomorphism is not necessarily partially hyperbolic unless the base dynamics are ``dominated'' by the dynamics in the fibres.

In this paper, we prove the global structural stability for fibrewise Anosov diffeomorphisms, generalizing the results of Franks and Manning. Namely, we prove the following theorem.

\begin{theorem}\label{main} Let $p:E\to B$ be a compact $C^1$ principal torus bundle and let $F:E\to E$ be a fibrewise Anosov diffeomorphism. Then there exists a fibrewise affine Anosov diffeomorphism $G:E\to E$ and a homeomorphism $h:E\to E$, which is homotopic to $\id_E$ and fibres over $\id_B$, such that $h\circ F=G\circ h$.
\end{theorem}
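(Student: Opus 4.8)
The plan is to adapt the arguments of Franks and Manning, carried out fibrewise and with the base as a parameter; I may assume $B$, and hence $E$, connected. Since $F$ is fibre-preserving and $dF$ preserves $V=\ker dp$, $F$ covers a diffeomorphism $f\colon B\to B$, and for each $b\in B$ the restriction $F_b:=F|_{p^{-1}(b)}\colon p^{-1}(b)\to p^{-1}(f(b))$ is a diffeomorphism of $d$-tori which, under the canonical identifications $\pi_1(p^{-1}(b))\cong\Z^d\cong\pi_1(p^{-1}(f(b)))$ induced by the structure group, realises a matrix $A_b\in GL(d,\Z)$; by continuity and discreteness $A_b$ is a constant $A$. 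I would also use that on a principal $\T^d$-bundle the vertical bundle is canonically trivial, $V\cong E\times\R^d$, via the fundamental vector fields; in particular each fibre acquires a flat structure $\cong\R^d/\Z^d$ with respect to which $F_b$ lifts to a diffeomorphism of $\R^d$ of the form $A+\phi_b$ with $\phi_b$ $\Z^d$-periodic and uniformly bounded in $b$.

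\emph{Hyperbolicity of $A$.} Because $V^s,V^u\subset V$ are tangent to the fibres, the stable and unstable laminations of $F$ lie inside the fibres; with $s=\dim V^s$, $u=\dim V^u$ one has $s+u=d$, and $0<s,u<d$, since otherwise $F_b$ would expand or contract $d$-dimensional volume at a uniform exponential rate, contradicting the uniform bound on fibre volumes. Along any $F$-orbit $V^u$ is a genuine invariant subbundle expanded by $d(F^n)$ at rate at least a constant times $\lambda^{-n}$, and each $F_b$ is homotopic to the linear automorphism $A$ of the torus; Manning's quasi-isometry argument for these laminations then forces $A$ to have at least $u$ eigenvalues of modulus greater than $1$ and, symmetrically, at least $s$ of modulus less than $1$, so $A$ is hyperbolic. (When $f$ has a periodic point of period $k$ this is literally Manning's theorem for the Anosov diffeomorphism $F^k$ of that fibre; in general the estimates must be run uniformly over the family $\{F_{f^n(b)}\}$, and this is where I expect the main difficulty to lie.)

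\emph{A linear model.} Regarding $F$ as a fibre-bundle isomorphism $E\cong f^*E$ over $\id_B$ whose fibre restrictions have degree $A$ shows that the Euler class $c(E)\in H^2(B;\Z^d)$ satisfies $f^*c(E)=A_*c(E)$, where $A_*$ is induced by $A$ on coefficients (up to the usual sign and inverse conventions); this is precisely the condition for an $A$-map $G_0\colon E\to E$ over $f$ to exist, and one may take $G_0$ fibrewise affine. Since $A$ is hyperbolic, $G_0$ is a fibrewise affine Anosov diffeomorphism. Write $G_0(e)=F(e)\cdot\delta(e)$ with $\delta\colon E\to\T^d$ continuous; a short computation using that $F_b$ and $(G_0)_b$ both have linearisation $A$ shows that $\delta$ is null-homotopic on each fibre, hence $[\delta]$ lies in $p^*H^1(B;\Z^d)$. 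Replacing $G_0$ by $e\mapsto G_0(e)\cdot\mu(p(e))$ for a suitable $\mu\colon B\to\T^d$ — which leaves $G_0$ an $A$-map, hence still a fibrewise affine Anosov diffeomorphism — I may assume $\delta$ is itself null-homotopic, so that it lifts to a bounded continuous $d_0\colon E\to\R^d$.

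\emph{The conjugacy.} In the Banach space $C(E,\R^d)$ solve the linear equation $v\circ F-Av=d_0$: with respect to the splitting $\R^d=E^s_A\oplus E^u_A$ and an adapted norm, the stable component satisfies $v^s=A(v^s\circ F^{-1})+d_0^s\circ F^{-1}$ and the unstable component $v^u=A^{-1}(v^u\circ F)-A^{-1}d_0^u$, both of which are contractions, so there is a unique bounded continuous solution $v$. Set $h(e):=e\cdot[v(e)]$, where $[\,\cdot\,]\colon\R^d\to\T^d$ is the projection; then $h$ covers $\id_B$, is homotopic to $\id_E$ through $e\mapsto e\cdot[tv(e)]$, and the defining equation together with the fact that $G_0$ is an $A$-map yields $h\circ F=G_0\circ h$. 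It remains to see $h$ is a homeomorphism: it is fibrewise homotopic to the identity, hence a degree-one self-map of a torus, hence surjective; and if $h(e)=h(e')$ then $F^n e$ and $F^n e'$ have the same $h$-image for every $n\in\Z$ and therefore remain at bounded vertical distance, which forces $e=e'$ by fibrewise expansiveness of $F$ — equivalently, by the global product structure of the fibrewise Anosov system on the universal covers of the fibres, exactly as in Franks' argument. Taking $G:=G_0$ then completes the proof.
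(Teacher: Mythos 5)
Your overall architecture matches the paper's: identify the constant matrix $A$, homotope $F$ to an $A$-map, solve $v\circ F-Av=d_0$ by splitting into stable and unstable components and contracting, and then prove injectivity of the resulting $h$ fibre by fibre. The cohomological bookkeeping (that $\delta$ is fibrewise null-homotopic, hence its homotopy class factors through $\pi_1(B)$, and can be killed by composing with a translation $\mu\circ p$) is essentially the paper's lemma showing $r\simeq v\circ p$ together with Proposition \ref{homot}. However, at the two points where you defer to ``Manning'' or to ``Franks, exactly as in the single-torus case,'' the fibrewise setting genuinely requires new arguments, and as written your proof has gaps precisely there.

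First, hyperbolicity of $A$. You correctly observe that when $f$ has a periodic point this is Manning's theorem for the return map of that fibre, and you concede that ``in general the estimates must be run uniformly over the family'' without explaining how. The paper closes this by taking a recurrent point $b$ of $f$ (which exists by compactness of $B$), and using the Tube Lemma together with a cone-field argument (Lemmas \ref{cone}, \ref{ContraExp}, \ref{invbundles}) to show that $\bar F_b^N=\varphi_b\circ\psi\circ F_b^N:p^{-1}b\to p^{-1}b$ --- the near-return closed up by the local trivialization --- is an honest Anosov diffeomorphism of a single torus; Manning's theorem applied to it gives hyperbolicity of $A^N$, hence of $A$. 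Second, injectivity of $h$. You invoke ``the global product structure of the fibrewise Anosov system on the universal covers of the fibres, exactly as in Franks' argument,'' but $F_b$ maps the fibre over $b$ to the fibre over $f(b)$, so it is not an Anosov diffeomorphism of any single torus, and Franks' uniqueness-of-intersection argument does not apply verbatim: a priori a stable and an unstable leaf inside one fibre's universal cover could meet twice. Establishing the global product structure in every fibre is the technical heart of the paper: it is proved first over recurrent points (Lemma \ref{recgps}, again by comparison with the genuine Anosov return map), and then the set of base points enjoying it is shown to be both closed and open (Lemma \ref{gpsopen}), hence all of $B$. Your appeal to ``fibrewise expansiveness'' does not substitute for this, since bounded vertical separation of the orbits of $e$ and $e'$ only produces a contradiction once the product structure (and the uniform bound of Lemma \ref{distleaf} on leafwise distances inside compact sets) is available.
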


\begin{remark} Note that because the structure group of a compact principal torus bundle only contains translations, a fibre-preserving map induces the same automorphism on the homology of the torus fibre over each point, provided that the base is connected. This is because the restrictions of the map to nearby fibres are homotopic, and then we can conclude by going from neighborhood to neighborhood.
\end{remark}

We will get the semiconjugacy from the following proposition. 

\begin{proposition}\label{semiii} Suppose $F$ is a continuous fibre-preserving map on a compact principal torus bundle whose base is connected, which induces an automorphism $A:H_1(\T^d)\to H_1(\T^d)$ on the first homology of the torus fibre. Then it is fibrewise homotopic to an $A$-map.

If in addition we assume the bundle is $C^1$, and $A$ is hyperbolic, then $F$ is semi-conjugate to a fibrewise affine Anosov diffeomorphism.
\end{proposition}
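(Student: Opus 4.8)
The plan is to build the fibrewise homotopy to an $A$-map in two stages, first locally over a trivializing cover, then globally by patching, and then to run a Franks–Manning-type argument in the fibres to produce the semi-conjugacy. First I would fix a good cover $\{U_i\}$ of $B$ by open sets over which the bundle trivializes, $p^{-1}(U_i)\cong U_i\times\T^d$, with transition functions taking values in the translation group $\T^d$. Over each $U_i$ the restriction $F_i:=F|_{p^{-1}(U_i)}$ is a family of self-maps $\{f_{i,b}:\T^d\to\T^d\}_{b\in U_i}$ depending continuously on $b$; since $U_i$ is contractible, all the $f_{i,b}$ are homotopic and each induces the same map $A$ on $H_1(\T^d)$. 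Because a continuous self-map of $\T^d$ inducing $A$ on $H_1$ is homotopic to the linear map $x\mapsto Ax$ (lift to $\R^d$, use that the lift differs from $A$ by a $\Z^d$-periodic map, and straight-line homotope), I can choose these homotopies to depend continuously on $b$ — this is where I would invoke the contractibility of $U_i$ and a partition-of-unity/obstruction argument to get a fibrewise homotopy $H_i$ from $F_i$ to a local $A$-map $G_i$. The first genuine issue is globalization: the local $G_i$ need not agree on overlaps, but on $U_i\cap U_j$ both $G_i$ and $G_j$ are $A$-maps for the same $A$, so they differ by a fibre translation, i.e. by an element of $\T^d$, and the obstruction to gluing lives in $H^1(B;\underline{\T^d})$ or is killed by the freedom in choosing the homotopies; here is where connectedness of $B$ and the fact that the structure group consists only of translations (so the relevant sheaf is abelian and the cocycle condition is linear) are essential. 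Patching the $H_i$ with a partition of unity subordinate to $\{U_i\}$, using that the space of maps homotopic to a fixed $A$-map is convex in the appropriate sense, yields the global fibrewise homotopy $F\simeq G$ with $G$ an $A$-map, proving the first assertion.

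For the second assertion, assume the bundle is $C^1$ and $A\in GL(d,\Z)$ is hyperbolic, so $G$ is a fibrewise affine Anosov diffeomorphism. I want a continuous $\bar h:E\to E$, fibring over $\id_B$, with $\bar h\circ F=G\circ\bar h$. The idea is to run Franks' argument fibrewise but in a way that is uniform over the base. Lift everything: pull back the bundle to the universal cover $\tilde B$ of $B$ (or work over each $U_i$ and then check compatibility), where $p^{-1}$ is a product $\tilde B\times\T^d$, and further lift the $\T^d$-direction to $\R^d$. The maps $F$ and $G$ then lift to maps $\tilde F,\tilde G:\tilde B\times\R^d\to\tilde B\times\R^d$ that are $\Z^d$-equivariant in the fibre and equal to $(b,x)\mapsto(\beta(b),Ax)$ plus a bounded error for $\tilde G$, where $\beta$ is the (lifted) base map. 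The conjugacy $\tilde h$ should have the form $\tilde h(b,x)=(b,x+u(b,x))$ with $u$ bounded and $\Z^d$-periodic in $x$; the conjugacy equation becomes a fixed-point equation for $u$ which, because $A$ is hyperbolic, one solves by the usual contraction-mapping / graph-transform argument splitting into stable and unstable coordinates $\R^d=\R^s\oplus\R^u$ and summing the geometric series $\sum A^{-n}(\cdots)$ on the unstable part and $\sum A^n(\cdots)$ on the stable part. Uniformity of the hyperbolicity constants $C,\lambda$ over the compact base $B$ guarantees the contraction is uniform, so $u$ depends continuously on $b$ as well; $\Z^d$-periodicity and equivariance then let $\tilde h$ descend to the desired $\bar h:E\to E$, and boundedness of $u$ forces $\bar h$ to be homotopic to $\id_E$ over $\id_B$.

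The main obstacle I anticipate is not the fibrewise Franks argument — that is classical once the setup is right — but rather the bookkeeping in making all the local constructions (the homotopies $H_i$, and later the solutions $u$) genuinely continuous in the base parameter and compatible with the transition functions of the bundle. In particular, in the hyperbolic case one must check that the bounded correction $u$ obtained over different charts $U_i,U_j$ differs by the correct cocycle so that $\bar h$ is well-defined on $E$; this comes down to the fact that the transition functions are fibre translations and hence commute with the linear map $A$ in the relevant sense, so the uniqueness clause in the contraction-mapping theorem forces agreement on overlaps. A secondary technical point is that $F$ is only assumed $C^1$ and the fibrewise Anosov condition is only along $V$, not on all of $TE$, so one should be careful that the estimates used in the graph transform only ever involve $dF$ restricted to the (genuinely $dF$-invariant) vertical bundle, which is exactly what hypothesis (1)–(2) of a fibrewise Anosov diffeomorphism provide; no hyperbolicity of the base dynamics is needed or assumed here. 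Finally, I would remark that the homeomorphism produced is a priori only continuous in the fibre direction with continuous dependence on the base — upgrading to a genuine homeomorphism of $E$ requires observing that $\bar h$ is a continuous bijection of the compact space $E$, hence a homeomorphism, and that it fibres over $\id_B$ by construction.
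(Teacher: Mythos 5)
Your second half (the semi-conjugacy) is essentially the paper's argument: since $F\simeq G$, the difference $r$ with $F(e)+r(e)=G(e)$ is nullhomotopic and lifts to a bounded $\tilde r:E\to\R^d$, and one solves $A\tilde w-\tilde w\circ F=\tilde r$ by the two geometric series on the stable and unstable subspaces of $A$, setting $h(e)=e+w(e)$. Two corrections there: the paper lifts $r$ globally (the nullhomotopy comes for free from the homotopy $F\simeq G$), so no chart-by-chart compatibility check and no passage to the universal cover of $B$ is needed; and no hyperbolicity of $F$ or of the base enters at all --- only hyperbolicity of the fixed matrix $A$ and compactness of $E$ are used, so your concern about $dF|_V$ is moot at this stage.

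The genuine gap is in the first half. The paper does not patch local affine models; it composes $F$ with the canonical maps $E\to f^*E\to A(E)\to E$ to obtain a map $\bar F$ inducing the identity on fibre homology, writes $\bar F=\id_E+r$, and shows $r\simeq v\circ p$ via the long exact homotopy sequence of the fibration and the $K(\pi,1)$ property of $\T^d$ --- and, crucially, it cites \cite{GORH} for the existence of an $A$-map on $E$ in the first place. Your local-to-global step is exactly where this nontrivial input hides, and as written it does not go through. The canonical local affine approximations $G_i(b,x)=(f(b),Ax+c_i(b))$ differ on overlaps by a \v{C}ech $1$-cocycle $d_{ij}:U_i\cap U_j\to\T^d$, and the only freedom you have is to modify each $G_i$ by a continuous function $U_i\to\T^d$; the obstruction to gluing therefore lives in $H^1(B;\underline{\T^d})\cong H^2(B;\Z^d)$, which is not zero for a general base. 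It does vanish here, but only because the existence of the global fibre-preserving $F$ inducing $A$ forces $f^*E\cong A(E)$ as principal bundles --- this is the content of the classification results invoked from \cite{GORH}, and it must be proved, not waved at. Moreover your proposed repair, averaging with a partition of unity because ``the space of maps homotopic to a fixed $A$-map is convex,'' fails as stated: the discrepancies are $\T^d$-valued and cannot be averaged unless they have already been lifted to $\R^d$, which is precisely the obstruction in question.
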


We give some definitions in Section \ref{pre}, and examples of fibrewise Anosov diffeomorphisms in Section \ref{examples}. We will first prove in Proposition \ref{mprop} that every fibrewise Anosov diffeomorphism is homotopic to a fibrewise affine Anosov diffeomorphism by topological arguments in Section \ref{homotopy}. Then we prove in Proposition \ref{semii} that there is a semiconjugacy from each map covering the same map in the homotopy class of a fibrewise affine Anosov diffeomorphism, by applying a similar argument as in Franks' proof. Last, we establish the global structural stability at the end of Section \ref{pofmain}, by showing that the fibrewise lift of each pair of stable and unstable leaves has a unique intersection.

\section{Preliminaries}\label{pre}

In this section, we give definitions and properties that will come into use.

\begin{definition}[(\cite{kobanomi})] Let $G$ be a Lie group and $E$ a metric space where $G$ acts continuously, freely and properly on the right. Let $B=E/G$. Then we call the projection $p:E\to B$ a principal fibre bundle over $B$ with group $G$, and $E$ the total space, $B$ the base space, $p^{-1}b$ the fibre for each $b\in B$, and $G$ the structure group.
\end{definition}

In Theorem \ref{main}, by a $C^1$ principal torus bundle, we mean that $p:E\to B$ is a $C^1$ map.

\begin{property}[(\cite{palais})]\label{psi} The principal bundle $E$ defined as above is locally trivial, that is, every point $x\in B$ has a neighborhood $U$ where there exists a homeomorphism $\varphi: U\times G\to p^{-1}(U)$ and a map $\psi:p^{-1}(U)\to G$ so $\varphi^{-1}(e)=(p(e),\psi(e))$ and $\psi(e. g)=\psi(e). g$, where $e\in E$ and $g\in G$.

From now on we assume $E$ is a smooth manifold and the chart $\varphi$ is smooth.
\end{property}

\begin{property} The fibres of a principal bundle with group $G$ are diffeomorphic to $G$.
\end{property}

\begin{property}\label{cocycle} We fix trivializations $\{(\varphi_i,U_i)\}$ for the fibre bundle $p:E\to B$ with structure group $G$ and fibre $F$. Restricting $\varphi_i$ and $\varphi_j$ to $U_i\cap U_j$, there exists a unique map $g_{ji}:U_i\cap U_j\to G$ such that $\varphi_j^{-1}\varphi_i(b,x)=(b,g_{ji}(b)x)$ for $(b,x)\in (U_i\cap U_j)\times F$. The functions $g_{ji}$ satisfy the following properties:
\begin{itemize}\item[1.] For each $b \in U_i\cap U_j\cap U_k$ we have $g_{ki}(b)=g_{kj}(b)g_{ji}(b)$.
\item[2.] For each $b\in U_i$, $g_{ii}(b)=\id_G$.
\item[3.] For each $b\in U_i\cap U_j$, $g_{ij}(b)=g_{ji}^{-1}(b)$.
\end{itemize}
\end{property}

From now on, we will use additive notation ``$+$'' for the torus action.

\begin{remark} Recall the definition of a fibrewise Anosov diffeomorphism $F:E\to E$ on a principal torus bundle from Section \ref{intro}. The assumption that $F$ preserves the vertical bundle implies that $F$ is also fibre preserving. This can be seen as follows.

Let $e_0\in E$, $p(e_0)=b$, and suppose $\psi(e_0)=0$ with the notion in Property \ref{psi}. The fibre $\T^d$ is a connected compact Lie group where the exponential map is globally surjective. We identify the Lie algebra of left invariant vector fields on $\T^d$ and the tangent space $T_0\T^d$.  For any $e_1\in p^{-1}b$, we have an integrable curve $\gamma(t)=\exp(tX)$ in $p^{-1}b$, where $X\in T_0\T^d$, such that $\gamma(0)=e_0$ and $\gamma(1)=e_1$. Since $dF(\frac{d}{dt}|_{t=t_0}(e_0+\exp(tX))=\frac{d}{dt}|_{t=t_0}F(e_0+\exp(tX))\in V_{F(e_0+\exp(t_0X))}$ for all $t_0\in\R$, and $V$ is integrable whose intgrable manifolds at a point $e$ is just the fibre at $pF(e)$, we have $F(e_0+\exp(X))=F(e_1)\in p^{-1}(p(F(e_0)))$.
\end{remark}

We denote the map covered by $F$ as $f:B\to B$.

We would also like to recall the following lemma from general topology which we will refer to as the Tube lemma.

\begin{lemma}[(The Tube Lemma)] Consider the product space $X\times Y$, where $Y$ is compact. If $N$ is an open set of $X\times Y$ containing the slice $x_0\times Y$ of $X\times Y$, then $N$ contains some tube $W\times Y$ about $x_0\times Y$ about $x_0\times Y$, where $W$ is a neighborhood of $x_0$ in X.
\end{lemma}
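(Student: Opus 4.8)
The plan is the standard one: cover the compact slice $x_0\times Y$ by basic open rectangles lying inside $N$, use compactness of $Y$ to reduce to finitely many of them, and let $W$ be the intersection of their first factors.

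First I would exploit that $N$ is open in the product topology. For each $y\in Y$ the point $(x_0,y)$ lies in $N$, so there is a basic open set $U_y\times V_y$ of $X\times Y$ with $x_0\in U_y\subseteq X$ open, $y\in V_y\subseteq Y$ open, and $U_y\times V_y\subseteq N$. The family $\{V_y\}_{y\in Y}$ is then an open cover of $Y$; this is the one and only place the hypothesis on $Y$ enters, and it lets me pick finitely many points $y_1,\dots,y_n$ with $Y=V_{y_1}\cup\cdots\cup V_{y_n}$.

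Next I would set $W:=\bigcap_{i=1}^{n}U_{y_i}$. Since this is a finite intersection of open sets each containing $x_0$, $W$ is an open neighborhood of $x_0$ in $X$. To check $W\times Y\subseteq N$, take any $(x,y)\in W\times Y$, choose an index $i$ with $y\in V_{y_i}$, and observe that $x\in W\subseteq U_{y_i}$, so $(x,y)\in U_{y_i}\times V_{y_i}\subseteq N$. Hence $W\times Y$ is the desired tube about $x_0\times Y$.

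There is no genuine obstacle here; the lemma is essentially immediate once the right objects are in hand. The only two points that require care are (i) the rectangles must be taken from a basis for the \emph{product} topology, so that both factors are genuinely open and the rectangle really sits inside $N$, and (ii) $W$ must be cut out by only \emph{finitely many} of the sets $U_{y}$, since an infinite intersection of open sets need not be open (nor need it contain a neighborhood of $x_0$). Point (ii) is exactly what compactness of $Y$ supplies, which is why that hypothesis cannot be dropped.
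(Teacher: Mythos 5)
Your proof is correct and is the standard compactness argument for the Tube Lemma; the paper itself states this lemma without proof, citing it as a fact from general topology. Nothing further is needed.
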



\begin{remark} The stable and unstable distributions of a fibrewise Anosov diffeomorphism along the fibres are integrable following Theorem 2.6, \cite{bgmm}.
\end{remark}

\begin{remark}[(The local product structure)]\label{LPS} Consider a pair of transverse foliations. If the foliated manifold is compact, there exists a constant $\varepsilon$, such that any open neighborhood of diameter $<\varepsilon$ can be thought of parametrized by the plaques of the leaves.

The fibrewise leaves of a fibrewise Anosov diffeomorphism also have the local product structure. More precisely, for $b\in B$ and any $e,e'\in \T^d_b$ (or $\tilde{e},\tilde e'\in \R^d_b$) which denotes the fibre over $b$, there exists a constant $\varepsilon>0$ such that if $d(e,e')<\varepsilon$ (or $d(\tilde e,\tilde e')<\varepsilon$), then $W^s_b(e)\cap W^u_b(e')$ and $W^u_b(e)\cap W^s_b(e')$ (or $\widetilde W^s_b(\tilde e)\cap\widetilde W^u_b(\tilde e')$ and $\widetilde W^u_b(\tilde e)\cap \widetilde W^s_b(\tilde e')$ ) each contains exactly one point.

Now there is also such a constant even if we lift the leaves to the convering space. Since the bundle is compact and a sufficiently small neighborhood of $\T^d_b$ for each $b$ is evenly covered, by definition of the covering map $\R^d_b\to\T^d_b$, there is a uniform $\varepsilon$ for the bundle with which the local product structure is satisfied. We would refer to $\varepsilon$ ``the constant of the local product structure'' just to save a letter.
\end{remark}

\section{Examples of Fibrewise Anosov Diffeomorphisms}\label{examples}

\begin{example}[(The trivial example)]\label{extriv} Let $E=B\times\T^d$. Then $G:E\to E$, $G(b,t)=(f(b),A(t)+v(b))$ where $f:B\to B$ is a diffeomorphism and $v:B\to\T^d$ is a differentiable function, is a fibrewise affine Anosov diffeomorphism.
\end{example}

\begin{example}[(Nilmanifold automorphisms)]\label{exnil} Let $E=N/\Gamma$ be a nilmanifold where $N$ is a simply connected nilpotent Lie group and $\Gamma\subset N$ a discrete subgroup that acts cocompactly on $N$. Let $Z(N)$ denote the center of $N$. Then $Z(N)/(Z(N)\cap\Gamma)$ is a compact abelian Lie group and thus a torus group. It acts continuously, freely and properly on $E$. Thus $E$ is a principal bundle with torus fibres.

Suppose $A:N\to N$ is an automorphism and $A(\Gamma)=\Gamma$. If the restriction of $A$ to $Z(N)$ is hyperbolic, it induces a hyperbolic toral automorphism on $Z(N)/(Z(N)\cap\Gamma)$, and thus a fibrewise affine Anosov diffeomorphism on $E$.
\end{example}

\begin{remark} There are criteria given in \cite{GORH} whether a bundle would support fibrewise Anosov diffeomorphisms. We want to point out that one is able to construct many examples, with some algebraic topology.
\end{remark}

\begin{example}[($K3$ surface, \cite{GORH})]\label{exk3} Let the base be a $K3$ surface by the Kummer's construction \cite[3.3]{scorpan}. It is given by $X:=\T^2_{\C}\# 16\overline{\cp}^2/\iota$, where $\overline{\cp}^2$ is the complex projective plane with reversed orientation and $\iota$ is the involution induced by the involution of $\T^2_{\C}$, which gives rise to $16$ singularities where we attach the $\overline{\cp}^2$'s.

Identify $\T^2_{\C}=\T^4$, the usual real torus. For any given hyperbolic matrix $A\in SL(2,\Z)$, we are able to take a perturbation $A'$ of $A$, such that $A'\oplus A':\T^2_{\C}\to\T^2_{\C}$ descends to a diffeomorphism $f$ of the quotient space $X$, and there exists a principal $\T^2$ bundle with simply connected total space that admits a fibrewise affine Anosov diffeomorphism with matrix $A^2$ as specified in the definition, which covers $f:X\to X$. In particular, the above fibrewise affine Anosov diffeomrphism is partially hyperbolic by this construction.
\end{example}

We would also like to remark that from \cite{surgery}, we are able to construct more examples from the above examples \ref{extriv}, \ref{exnil} and \ref{exk3} by connect-summing along invariant tori.

More precisely, in these examples, the base dynamics (up to a finite order) already have or can be perturbed to have a hyperbolic fixed point, near which the map has a local form $(x,y)\mapsto (Ax,f_x(y))$ in a tubular neighborhood $\D\times\T^d$. Then by replacing $\D\times\T^d$ of the invariant fibre by $\tilde{\D}\times \T^d$ where $\tilde{\D}:=\{(x,l(x)):x\in\D,\ x\in l(x),\ l(x)\text{ is the line passing through } x\}$, we obtain what we call a blow-up of the original bundle, with boundary $\sph^k\times\T^d$ for some $k$.

Now if we have examples of different types, with the same fibre and base dimensions, whose base dynamics have hyperbolic fixed points where the local forms of the maps as described above are the same, then we can take the blow-ups and glue along the boundaries to get new examples of fibrewise affine Anosov diffeomorphisms. We get partially hyperbolic diffeomorphisms from gluing partially hyperbolic diffeomorphisms by \cite{surgery}.

\section{The Homotopy}\label{homotopy}

In this section, we let $F:E\to E$ be a $C^1$ fibrewise Anosov diffeomorphism on the principal $\T^d$ bundle $p:E\to B$ that covers $f:B\to B$. We prove the following proposition.

\begin{proposition}\label{mprop} There is a fibrewise affine Anosov diffeomorphism $G:E\to E$ such that $F$ is homotopic to $G$.
\end{proposition}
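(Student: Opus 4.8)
The plan is to build $G$ fibrewise-homotopically from $F$ in two stages: first reduce to the situation where $F$ restricts to the identity on one fibre over a chosen basepoint, then linearise the vertical behaviour using the hyperbolic splitting $V = V^s \oplus V^u$ together with the classification of Anosov diffeomorphisms on $\T^d$ due to Franks and Manning. First I would observe that $F$, being fibrewise Anosov, is in particular fibre-preserving (as recorded in the Remark of Section~\ref{pre}), so it induces a well-defined automorphism $A$ on $H_1(\T^d) \cong \Z^d$; since the base $B$ is connected and the structure group of a principal torus bundle consists only of translations, the induced map on the fibre homology is the same over every point of $B$, so $A$ is a single matrix in $GL(d,\Z)$. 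The restriction of $F$ to an invariant-up-to-homotopy fibre is, by the vertical hyperbolicity hypothesis (restricting the estimates in the definition of fibrewise Anosov), an Anosov diffeomorphism of $\T^d$ homotopic to $A$; hence $A$ is hyperbolic by Manning's theorem, and by Franks' theorem the fibre map is conjugate, hence homotopic, to the linear automorphism $L_A$.

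Next I would invoke Proposition~\ref{semiii} (or rather the first half of it, which is purely topological): since $F$ is a continuous fibre-preserving map on a compact principal torus bundle with connected base inducing $A$ on $H_1(\T^d)$, it is \emph{fibrewise homotopic} to an $A$-map $G_0 : E \to E$, meaning $G_0(e.g) = G_0(e).Ag$ for all $e \in E$ and $g \in \T^d$. Because $A$ is hyperbolic (established in the previous step), $G_0$ is by definition a fibrewise affine Anosov diffeomorphism, and we may take $G := G_0$. The homotopy $F \simeq G$ is exactly the fibrewise homotopy supplied by Proposition~\ref{semiii}, so this chains together to give the statement.

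The one genuine subtlety — and the step I expect to be the main obstacle — is making sure that the hyperbolicity of $A$ is legitimately available when we cite Proposition~\ref{semiii}, since the second (dynamical) half of that proposition already \emph{assumes} $A$ is hyperbolic, whereas here hyperbolicity of $A$ is a \emph{consequence} of $F$ being fibrewise Anosov rather than a hypothesis. So the argument must be ordered carefully: (i) use only that $F$ is fibre-preserving and continuous to extract the automorphism $A$ and the fibrewise homotopy to an $A$-map from the first half of Proposition~\ref{semiii}; (ii) separately, use the vertical hyperbolic estimates to see that the restriction of $F$ to a fibre is Anosov on $\T^d$, so by Manning $A$ must be conjugate to a hyperbolic matrix, hence hyperbolic; (iii) conclude that the $A$-map obtained in (i) is actually a fibrewise affine Anosov diffeomorphism. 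A secondary technical point is checking that the $A$-map $G_0$ produced by the homotopy is genuinely a diffeomorphism (not merely continuous) — this can be arranged by taking the $A$-map in a standard smooth normal form, e.g.\ linear in the fibres with respect to smooth local trivialisations, using that $E$ is assumed smooth and the charts $\varphi_i$ are smooth (Property~\ref{psi}); the vertical derivative is then literally $A$, so the fibrewise Anosov estimates hold with $C = C' = 1$.
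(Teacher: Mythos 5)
Your overall strategy coincides with the paper's: first show that $F$ induces a single hyperbolic automorphism $A$ on the first homology of the fibre, then invoke the purely topological statement (Proposition \ref{homot}, which is the first half of Proposition \ref{semiii}) to homotope $F$ to an $A$-map, which is fibrewise affine Anosov once $A$ is known to be hyperbolic. Your care about the order in which hyperbolicity of $A$ is established versus used is exactly right and matches the paper.

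The gap is in how you establish that $A$ is hyperbolic. You assert that the restriction of $F$ to an ``invariant-up-to-homotopy fibre'' is an Anosov diffeomorphism of $\T^d$ to which Manning's theorem applies. But the base map $f$ need not have a fixed (or even periodic) point, so no fibre is invariant, and the restriction $F_b:p^{-1}b\to p^{-1}(f(b))$ is a map between two \emph{different} tori; it is not a self-map, so neither the definition of an Anosov diffeomorphism nor Manning's theorem applies to it directly. Producing an honest Anosov self-map of a single fibre is the main technical content of Section \ref{homotopy}: the paper takes a recurrent point $b$ of $f$, chooses $N$ so that $f^N(b)$ returns into a trivialized neighborhood of $b$, identifies $p^{-1}(f^Nb)$ with $p^{-1}b$ via the chart, and studies the return map $\bar F_b^N=\varphi_b\circ\psi\circ F_b^N$. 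This identification perturbs the dynamics, so the hyperbolicity of $\bar F_b^N$ is not an immediate restriction of the fibrewise Anosov estimates; it has to be recovered by the cone-field argument of Lemmas \ref{cone}--\ref{invbundles}. Only then can one apply Manning to conclude that $(\bar F_b^N)_*=A^N$, and hence $A$, is hyperbolic, and propagate $(F_b)_*=A$ over all of $B$ by connectedness and compactness. Your proposal needs this step (or a substitute for it) to be complete; the remainder, including the remark that the $A$-map can be taken linear in smooth local trivialisations, is consistent with the paper's treatment.
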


We will prove the above proposition by the end of this section. First we want to pick a recurrent point on the base, and show the projection of the map to the fibre over the recurrent point is Anosov. We start with the usual cone field argument \cite[Chapter 6]{k-h}.

In the following, if we take a point $e\in E$ and let $b=p(e)=:b_0$ denote the projection, we will always denote $b_n=f^n(b)$ for $n\in\Z$.

We use $V_e$ to denote the vertical tangent space at the point $e\in E$. Consider the restriction of the $n$-th iteration of $F$ to a fibre $F_{b_0}^n:p^{-1}b_0\to p^{-1} b_n$. Denote by $dF^n_{b_0}:V_{b_0}\to V_{b_n}$ the restriction of the differential of $F^n_{b_0}$ to the vertical bundle tangent to $p^{-1}b_0$ where $V_b=\cup_{e\in p^{-1}b}V_e$.

Let $p^{-1}U$ be a trivialized neighborhood. Let $\varphi:p^{-1}U\to U\times\T^d$ denote the coordinate chart, and $\psi: p^{-1}U\to\T^{d}$ be as in Property \ref{psi}.

Suppose $e\in p^{-1} U$ and $b\in U$. Since we have assumed that there is a splitting $V_b=E_b^s\oplus E_b^u$, for any $v\in V_b$ we can write $v=v^s+v^u$ where $v^s\in E^s_b$ and $v^u\in E^u_b$.
By assumption we have a metric on $TE$ such that for all $v^s\in E^s$, $v^u\in E^u$ and all $n\geq 0$, $\|dF^nv^s\|\leq C\lambda^n\|v^s\|$ and $\|dF^{-n}v^u\|\leq C\lambda^{n}\|v^u\|$ with some $C>0,\lambda\in (0,1)$.

\begin{remark} We could always assume $C=1$ in the following sense. Let $\lambda<\lambda'<1$. We can take the adapted metric such that for two vectors $u,v\in V^s$,
$$\langle u,v\rangle'=\sum_{k=0}^{\infty}(\lambda')^{-2k}\langle dF^k u,dF^kv\rangle$$
for two vectors $u,v\in V^u$,
$$\langle u,v\rangle'=\sum_{k=0}^{\infty}(\lambda')^{2k}\langle dF^{-k}u,dF^{-k}v\rangle,$$
and we set for $u\in V^s,v\in V^u, w\in H$,
$$\langle u,v\rangle'=\langle u,w\rangle'=\langle v,w\rangle'=0.$$
Then we can check the metric is a well-defined metric (the series is bounded) and for instance if $v\in V^s$
\begin{align*} \|dF^nv\|'^2 &= \sum_{k=0}^\infty (\lambda')^{-2k}\|dF^{n+k}v\|^2\\
&= \sum_{k=0}^\infty (\lambda')^{-2k-2n}\|dF^{k}v\|^2-\sum_{k=0}^{n-1}(\lambda')^{-2k}\|dF^kv\|^2\\
&\leq (\lambda')^{-2n}\sum_{k=0}^\infty (\lambda')^{-2k}\|dF^kv\|^2=(\lambda')^{-2n}\|v\|'^2;
\end{align*}
similarly for any $v\in V^u$.
\end{remark}

Now assume the dimensions of $V$, $E^s$ and $E^u$ are $d$, $l$ and $d-l$, respectively. Take an arbitrary local basis of $E^s$ by $p_1,\ldots,p_l$ and of $E^u$ by $p_{l+1},\ldots,p_d$. For two points $e,e'\in p^{-1}U$, suppose $\psi(e)=\psi(e')=x\in\T^d$. Use $d\varphi\circ d\psi:p^{-1}U\to V$ to identify the vertical tangent spaces at $e$ and $e'$. If $p_{i,e}$ is a basis element for $V_e$, then $d\varphi\circ d\psi (p_{i,e})$ is a corresponding basis element for $V_{e'}$, for $i=1,\ldots,d$. But note that this might be different from $p_{i,e'}$, which we picked from a basis of the the stable or unstable subbundle, and they vary continuously as $e$ varies. We can identify the vetical tangent spaces with $\R^d$.

Let $v=v^s+v^u\in E^s\oplus E^u$ where $v^s\in E^s$ and $v^u\in E^u$. Define $C_{u,e}^\gamma$, $C_{s,e}^\gamma$, the cones at a point $e\in E$ with $0<\gamma< 1$ very close to $0$, by
$$C_{u,e}^\gamma=\{v\in V_e:\|v^s\|\leq\gamma\|v^u\|\},\ \ \text{and}\ \ C_{s,e}^\gamma=\{v\in V_e:\|v^u\|\leq\gamma\|v^s\|\}.$$

Suppose $b$ is a recurrent point of $f$, i.e., there is a subsequence $\{b_{n_k}\}$ that converges to $b$. We take a large $N>0$ such that $f^N(b)$ lands in a trivialized neighborhood that contains the fibre $p^{-1}b$.

For each $e$ denote $\varphi(b,\psi(e'_N))=:e_N$, where $e'_N$ denotes $F^N_b(e)\in p^{-1}(f^Nb)$, that is, $e_N$ is the point in $p^{-1}b$ that has the same projection to the fibre as $e_N'$. Let $\varphi_b:\{b\}\times\T^d\to p^{-1}b$ denote the restriction of the chart to the fibre over $b$. Let $\bar{F}_{b}^N:=\varphi_b\circ\psi\circ F_{b}^N:p^{-1}b\to p^{-1}b$ and $\bar{F}_{b}^{-N}:=F_{b}^{-N}\circ \varphi_{f^N(b)}\circ \psi:p^{-1}b\to p^{-1}b$. Then $d\bar{F}_{b}^N$ and $d\bar{F}_{b}^{-N}$ again preserve the tangent spaces. For simplicity of the notation, we will also use $d\bar{F}^N_e:=(d\bar{F}^N_{b})_e:V_e\to V_{e_N}$ when we want to emphasize the point $e$.

That the cones are invariant under $dF_e^N:V_e\to V_{e_N'}$ is obvious. We claim that the cones are also invariant under $d\bar F^N_e: V_e\to V_{e_N}$.

\begin{lemma}\label{cones}\begin{itemize}\item[(1)] If $N$ is big enough, we have $$dF_e^{N}C_{u,e}^\gamma\subset\interior C_{u,e_N'}^{\gamma/2},\ \ \text{and} \ \ \ dF_e^{-N}C_{s,e_N'}^\gamma\subset\interior C_{s,e}^{\gamma/2};$$
\item[(2)] If $b'$ is close enough to $b$, then for all points $e\in p^{-1}b$ and $e'\in p^{-1}b'$,
$$ d\varphi_b\circ d\psi C_{u,e'}^{\gamma/2} \subset\interior C_{u,e}^{\gamma},\ \ \text{and}\ \ \  d\varphi_{f^N(b)}\circ d\psi C_{s,e}^{\gamma/2} \subset\interior C_{s,e'}^{\gamma}.$$
\item[(3)] There exists a $\lambda'\in (0,1)$ such that $$\|d\bar F^{N}_{e}(v^s,v^u)\|\geq(\lambda')^{-N} \|(v^s,v^u)\|,\ \ \ \text{if}\ \ (v^s,v^u)\in C_{u,e}^\gamma,\ \ \text{and}$$
$$\|d\bar F^{-N}_{e}(v^s,v^u)\|\geq (\lambda')^{-N} \|(v^s,v^u)\|,\ \ \ \text{if}\ \ (v^s,v^u)\in  C_{s,e}^\gamma.$$
\end{itemize} 
\end{lemma}
\begin{remark} We want to point out that the situations are slightly different for the stable and unstable cones, because for $d\bar F^N_b$ we post-composed the projection to the fibre, while for $d\bar F^{-N}_{b}$ we pre-composed the projection, just to make sure that they are both self maps of the tangent spaces of the same fibre $p^{-1}b$. Thus note that we do not really need the $\gamma/2$ for the stable cone in the statement of (1) in the above lemma for our purpose, but we still make the statement in a symmetric manner. In addition, $d\bar F^N_b$ and $d\bar F_b^{-N}$ are inverses of each other.
\end{remark}
\begin{proof}[Proof of Lemma \ref{cones}] 
We show all items for the stable cones.

To prove (1): Suppose $v^s+v^u\in C_{s,e'_N}^\gamma $. Then if $N>\frac{-\ln2}{2\ln\lambda}$,
\begin{align*} \|dF_{e}^{-N}v^u\|\leq \lambda^N\|v^u\|\leq \gamma\lambda^N\|v^s\|\leq \gamma \lambda^{2N}\|dF^{-N}_e v^s\|< \gamma/2\|dF^{-N}_e v^s\|.
\end{align*}
So $dF^{-N}_e(v^s+v^u)\in\interior C_{s,e}^{\gamma/2}$.

To prove (2): Recall that we picked the basis $p_k,\ k=1,\ldots,d$ for the vertical bundle, where $p_i,\ i=1,\ldots, l$ is a basis of $E^s$ and $p_i,\ i=l+1,\ldots, d$ is a basis of $E^u$, in the entire trivializing neighborhood of $e$, varying continuously. We want to show that if $\|a^i_ep_{i,e}\|\leq\gamma/2\|a^j_ep_{j,e}\|$ for $l+1\leq i\leq d$ and $1\leq j\leq l$, then with $d\varphi_{f^N(b)}\circ d\psi (a_e^kp_{k,e})=  (a^i_{e'}p_{i,e'}, a^j_{e'}p_{j,e'})$, $1\leq k\leq d, l+1\leq i\leq d, 1\leq j\leq l$ under the change of basis, we have $\|a^i_{e'}p_{i,e'}\|\leq\gamma\|a^j_{e'}p_{j,e'}\|$.

This is simply because of the continuity of the basis and compactness of the space. We give a detailed computation below. If we think of the basis vectors as constant vectors of $\R^d$ and denote the change of basis matrix $S_i^{\ j}$ such that $p_{i,e'}=S_i^{\ j}p_{j,e}$, $i,j=1,\ldots,d$. For any $\varepsilon>0$, there exists a uniform $\delta>0$ such that if $B_\delta$ is a ball of radius $\delta$ and $e,e'\in B_\delta$, then both $\|S_i^{\ j}-\id\|\leq \varepsilon/d^2$ and $\|(S_i^{\ j})^{-1}-\id\|\leq \varepsilon/d^2$. We could also assume $B_\delta$ is contained in a trivialized neighborhood by compactness of the base, and then by the tube lemma (we emphasize the compactness of the fibre here), we can find product neighborhoods that covers the entire bundle and whenever $e,e'$ are in the same neighborhood, both $\|S^{\ j}_i-\id\|\leq \varepsilon/d^2$ and $\|(S_i^{\ j})^{-1}-\id\|\leq \varepsilon/d^2$.

Now $l+1\leq i\leq d$, $1\leq j\leq l$, $1\leq k\leq d$, let $S_{\ j}^i$ denote $(S_i^{\ j})^{-1}$, and for simplicity let $a^k_{e'}p_{k,e'}= d\varphi_{f^N(b)}\circ d\psi (a_e^kp_{k,e})$  be a unit vector, where $a_e^kp_{k,e}\in  C_{s,e}^{\gamma/2}$ (note then $\|a^i_{e'}p_{i,e'}\|\leq 1$). We then have,
\begin{align*} \|a^i_{e'}p_{i,e'}\| &= \|a^k_{e} p_{k,e}\|= \|a^i_{e'} S_i^{\ k} S_{\ k}^i p_{i,e'}\| \ \ \ \ \  \text{(Write in different bases.)}\\
&\leq \|a^t_{e'} S_t^{\ i} S_{\ i}^t p_{t,e'}\|+\|a^t_{e'} S_t^{\ j} S_{\ j}^t p_{t,e'}\|\ \ \ \ \  \text{(Here $l+1\leq t\leq d,\ 1\leq j\leq l$.)} \\
& = \|a_e^ip_{i,e}\|+\|a^t_{e'} S_t^{\ j} p_{j,e'}\| \ \ \ \ \  \text{(Note that $a^k_{e}p_{k,e}\in C_{s,e}^{\gamma/2}$.)}\\
&\leq \gamma/2\|a^j_e p_{j,e}\|+ \frac{\varepsilon}{d^2}\cdot d^2  \ \ \ \ \  \text{(since $|S_t^{\ j}|\leq \frac{\varepsilon}{d^2}$ off diagonal.)}\\
&\leq \gamma/2 (\|a^j_{e'}p_{j,e'}\|+\varepsilon)+\varepsilon\ \ \ \ \ \text{(Repeat the above change of coordinates.)}\\
&\leq \gamma\|a^j_{e'}p_{j,e'}\|,\ \ \text{if we take sufficiently small $\varepsilon$.}
\end{align*}

To prove (3): Assuming (1) and (2), we have $\bar F^{-N}(e_N)=e$ and for any $(v^s,v^u)\in C_{s,e_N}^\gamma$,\ $d\bar F^{-N}(v^s,v^u)\in C_{s,e}^\gamma$. Then 
\begin{align*} \|d\bar F_{e}^{-N}(v^s,v^u)\| &\geq \|d\bar F^{N}_{e}v^s\| - \|d\bar F^{N}_{e}v^u\|\geq (1-\gamma) \|dF^{-N}_{e}v^s\|\geq (1-\gamma)\lambda^{-N}\|v^s\| \\
&\geq \frac{1-\gamma}{1+\gamma}\lambda^{N}\|(v^s,v^u)\|, \ \ \ \ \text{because $\|v^s\|\geq \frac{1}{1+\gamma}\|v^s+v^u\|$.}\end{align*}
We can take sufficiently small $\gamma$ and let $\lambda^N_1=\frac{1-\gamma}{1+\gamma}\lambda^N$; similarly for $C_{u,e}^\gamma$, we get a $\lambda_2$. Then we can take $\lambda'=\max\{\lambda_1,\lambda_2\}$.
\end{proof}

Following from the lemma and the cone criterion, $\bar{F}_b^N=\varphi_b\circ\psi\circ F_{b}^N$ is an Anosov diffeomorphism. Recall that $p^{-1}b$ is canonically identified with $\T^d$ up to a translation. Thus $\bar{F}_b^N$ induces a hyperbolic automorphsim $(\bar{F}_b^N)_*$ on $H_1(\T^d;\R)=\R^d$ \cite{manning1}. By continuity of $F$, we know that if $b,b'$ are close enough points in the base, $F_b$ and $F_{b'}$ are homotopic, and thus induce the same automorphism on $H_1(\T^d;\R)$. By compactness, we are able to extend this to the entire $B$, so for all $b\in B$, $(F_b)_*=A$ for a fixed $A$, which is also hyperbolic. We state this fact as the following lemma.

\begin{lemma} For any $b\in B$, the induced homomorphism $(F_b)_*: H_1(\T^d;\R)\to H_1(\T^d;\R)$ is the same hyperbolic automorphism.
\end{lemma}

Now suppose $F,G:E\to E$ are two arbitrary continuous maps that cover $f$, we can define $r:E\to\T^d$ to be such that $F(e)=G(e)+r(e)$. Note that morally $r$ is defined as the amount we need to translate from $G$ to get $F$. Because the action is free, $r$ is well-defined and unique. We show that it is continuous.

\begin{lemma} \label{cts} For two continuous maps $F,G:E\to E$ on a principal bundle with fibre $H$ that cover the same map, we have a continuous map $\bar{r}:E\to H$ such that $F(e).\bar{r}(e)=G(e)$.
\end{lemma}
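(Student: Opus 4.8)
The plan is to construct $\bar r$ locally using the principal bundle trivializations and then observe that the local pieces automatically agree. First I would fix a trivializing cover $\{(\varphi_i, U_i)\}$ of $B$ as in Property \ref{psi}, with the associated maps $\psi_i : p^{-1}(U_i) \to H$ satisfying $\varphi_i^{-1}(e) = (p(e), \psi_i(e))$ and $\psi_i(e.g) = \psi_i(e).g$. Since $F$ and $G$ both cover $f$, for $e \in p^{-1}(U_i)$ with $f(p(e)) \in U_j$ the two points $F(e)$ and $G(e)$ lie in the same fibre $p^{-1}(f(p(e)))$, hence in the same trivialized patch $p^{-1}(U_j)$, so I can compare their $\psi_j$-coordinates. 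Because the $H$-action on each fibre is free and transitive, there is a unique $\bar r(e) \in H$ with $G(e) = F(e).\bar r(e)$, and in coordinates $\bar r(e) = \psi_j(F(e))^{-1}\,\psi_j(G(e))$ (using the group structure of $H$); this is manifestly continuous on $p^{-1}(U_i \cap f^{-1}(U_j))$ as a composition of continuous maps.

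Next I would check that these locally defined functions glue to a global continuous $\bar r : E \to H$. The point is that $\bar r(e)$ was \emph{defined} by the coordinate-free equation $G(e) = F(e).\bar r(e)$, so uniqueness (freeness of the action) forces the local formulas to agree on overlaps — there is genuinely nothing to reconcile, the coordinates were only a device to see continuity. Continuity is a local property, and the sets $p^{-1}(U_i \cap f^{-1}(U_j))$ cover $E$ as $(i,j)$ ranges over pairs with $U_i \cap f^{-1}(U_j) \neq \emptyset$ (using continuity of $f$ to get such a cover of $B$), so $\bar r$ is continuous everywhere. In our setting $H = \T^d$, written additively, so the formula simply reads $r(e) = \psi_j(G(e)) - \psi_j(F(e))$ and $F(e) + r(e) = G(e)$, which is exactly the claim stated just before the lemma.

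I do not expect a serious obstacle here; the lemma is essentially a formal consequence of the local triviality of principal bundles plus freeness of the structure-group action. The only mild subtlety worth stating carefully is that to write $\bar r(e) = \psi_j(F(e))^{-1}\psi_j(G(e))$ one must know $F(e)$ and $G(e)$ lie over the same base point \emph{and} in a common trivializing neighborhood, which is where the hypothesis that $F$ and $G$ cover the same map $f$ is used, together with the fact that $p^{-1}(U_j)$ contains the entire fibre over each point of $U_j$. If one wanted to be maximally careful about continuity of $f$ one could instead cover $B$ by the sets $U_i \cap f^{-1}(U_j)$ directly; this is harmless. I would present the argument in two short steps: (i) pointwise definition and well-definedness via freeness; (ii) continuity via the coordinate formula on each $p^{-1}(U_i \cap f^{-1}(U_j))$, noting these patches cover $E$.
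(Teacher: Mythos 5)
Your proposal is correct and follows essentially the same route as the paper: define $\bar r$ locally by $\bar r(e)=\psi_j(F(e))^{-1}.\psi_j(G(e))$ on trivializing patches and observe the local definitions agree and are continuous. The only cosmetic difference is that you justify chart-independence via freeness of the action (uniqueness of the solution to $G(e)=F(e).\bar r(e)$) while the paper checks it directly with the transition functions; both are fine.
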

\begin{proof} We use charts. Suppose for a fixed $b$, $f(b)\in U_j$. Define $r_{j}=(\psi_jF)^{-1}.\psi_jG:E\to H$. We show $r_j$ is independent of charts.

This is because if, at the same time, $f(b)\in U_k$ for some $k\neq j$, then $\psi_k=h_{jk}.\psi_j$. We have $r_j=r_k$. These $r_j$'s are continuous as compositions of continuous functions. Thus we get a globally defined continuous function $\bar{r}:E\to H$ that agrees with the $r_j$'s everywhere.
\end{proof}

Now suppose we have a fibre-preserving map $F:E\to E$ that induces the same matrix $A$ on the homology of every fibre. From \cite{GORH}, if $E$ admits an $A$-map (we will show it does), then we have the following commutative diagram. Here $A(E)$ means $E$ with cocycles $\{A\circ g_{ji}\}$ as in the notation of Property \ref{cocycle}.  For the existence and well-definedness of such bundles and $A$-maps, we refer to Proposition 4.6 and Theorem 6.2 in the same paper.
\begin{equation}\label{diagram}\begin{tikzcd} E\rar["F"]\dar & E\rar["\id_{\T^d}\text{-map}"]\dar & f^*E \rar["\id_{\T^d}\text{-map}"]\dar & A(E) \rar["A^{-1}\text{-map}"]\dar & E\dar\\
B\rar["f"] & B\rar["f^{-1}"] & B\rar["\id_B"] & B\rar["\id_B"] & B
\end{tikzcd}\end{equation}

Denote the composition in the upper row $\bar{F}$. Then the restricton of $\bar{F}$ to each fibre clearly induces the identity on $H_1(\T^d;\R)$.

\begin{lemma} Let $F:E\to E$ be a fibre-preserving map that induces the same automorphism $A$, which is not necessarily hyperbolic, on the homology of each fibre. Let $r:E\to\T^d$ be such that $\bar{F}=\id_E+r$, where $\bar{F}$ is as the above. Then $r$ is homotopic to $v\circ p$, where $v:B\to\T^d$ is a translation in the fibre and $p:E\to B$ is the projection, i.e., the following diagram commutes up to homotopy.
\begin{equation}\begin{tikzcd} E\rar["r"]\dar["p"'] & \T^d\\
B\arrow[ur,"v"'] \end{tikzcd}\end{equation}
\end{lemma}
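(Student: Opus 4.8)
The plan is to recognize $r$ as a classifying map into the aspherical space $\T^d$ and to show that its homotopy class comes from the base. First I would set up the object. Since $\bar F$ fibres over $\id_B$, for every $e\in E$ the point $\bar F(e)$ lies in $p^{-1}(p(e))$, so by freeness of the $\T^d$-action the element $r(e)$ with $\bar F(e)=e+r(e)$ is uniquely determined; applying Lemma~\ref{cts} with $F$ replaced by $\id_E$ and $G$ by $\bar F$ shows $r:E\to\T^d$ is continuous. We may assume $B$, and hence $E$, is connected (otherwise argue on each component). Since $\T^d$ is a $K(\Z^d,1)$ and $E,B$ are compact manifolds, hence CW complexes, we have natural identifications $[E,\T^d]=H^1(E;\Z^d)=\mathrm{Hom}(\pi_1 E,\Z^d)$ and likewise for $B$, and the assertion is exactly that the class $[r]$ lies in the image of $p^*:[B,\T^d]\to[E,\T^d]$, equivalently that the homomorphism $r_*:\pi_1 E\to\Z^d$ factors through $p_*$.

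Next I would analyse the restriction of $r$ to one fibre. Fix $b\in B$, pick a local trivialization of $p$ over a neighbourhood of $b$, and use the map $\psi$ of Property~\ref{psi} to identify $p^{-1}b$ homeomorphically with $\T^d$. Because $\bar F$ fibres over $\id_B$ it restricts to a self-map of $p^{-1}b$, and under $\psi$ this self-map takes the form $y\mapsto y+\rho(y)$, where $\rho:\T^d\to\T^d$ is $r|_{p^{-1}b}$ transported through $\psi$; hence its effect on $\pi_1(\T^d)=H_1(\T^d;\Z)=\Z^d$ is $\mathrm{id}+\rho_*$. Since by hypothesis $\bar F$ induces the identity on $H_1$ of each fibre, we get $\rho_*=0$, and as $\T^d$ is aspherical this forces $r|_{p^{-1}b}$ to be null-homotopic.

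Then I would feed this into the homotopy exact sequence of the fibre bundle $\T^d\to E\xrightarrow{p}B$. For the fibre inclusion $i:p^{-1}b\hookrightarrow E$ we have $r_*\circ i_*=(r\circ i)_*=(r|_{p^{-1}b})_*=0$ by the previous step, so $r_*$ vanishes on $\mathrm{im}(i_*)=\ker(p_*)$; and $p_*:\pi_1 E\to\pi_1 B$ is surjective because $\pi_0(\T^d)$ is trivial. Therefore $r_*$ descends to a unique homomorphism $\bar v:\pi_1 B\to\Z^d$ with $r_*=\bar v\circ p_*$. Realizing $\bar v$ by a map $v:B\to\T^d$ (again using $\T^d=K(\Z^d,1)$ and that $B$ is CW) gives $(v\circ p)_*=\bar v\circ p_*=r_*$, and since maps of a CW complex into an aspherical space are determined up to homotopy by the induced map on $\pi_1$, we conclude $v\circ p\simeq r$, which is precisely the homotopy asserted by the diagram.

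I expect the only slightly delicate point to be the fibrewise step: one must keep the identification of $p^{-1}b$ with $\T^d$ (via $\psi$) and of $r|_{p^{-1}b}$ with a self-map of $\T^d$ consistent, so that the hypothesis "$\bar F$ induces $\mathrm{id}$ on $H_1$ of the fibre'' genuinely translates into "$r$ restricted to the fibre induces $0$''. Once that is nailed down, the rest is a formal consequence of the exact sequence of the fibration together with asphericity of the torus. (Alternatively, the last two paragraphs can be packaged in the five-term exact sequence $0\to H^1(B;\Z^d)\xrightarrow{p^*}H^1(E;\Z^d)\xrightarrow{i^*}H^1(\T^d;\Z^d)\to H^2(B;\Z^d)$ of the Serre spectral sequence of $p$ — the coefficients are untwisted since the structure group $\T^d$ is connected — where $i^*[r]=0$ by the fibrewise step immediately places $[r]$ in the image of $p^*$.)
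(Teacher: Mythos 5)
Your proposal is correct and follows essentially the same route as the paper: both show that $r$ restricted to a fibre is trivial on $\pi_1$ (because $\bar F$ induces the identity on $H_1$ of the fibre), use surjectivity of $p_\#$ and $\ker p_\# = \operatorname{im} i_\#$ from the exact sequence of the fibration to descend $r_\#$ to a homomorphism $\pi_1(B)\to\pi_1(\T^d)$, and then realize it by a map $v$ via the $K(\pi,1)$ property of $\T^d$. Your write-up is slightly more explicit about the fibrewise identification and the final comparison $v\circ p\simeq r$, and the spectral-sequence packaging is a nice aside, but the substance is identical.
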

\begin{proof} We have the following diagram.
\begin{equation}\begin{tikzcd}\pi_1(\T^d)\rar["i_\#",hook]\arrow[dr,"\text{trivial map}"',dashed] & \pi_1(E)\rar["p_\#"]\dar["r_\#" description] & \pi_1(B)\arrow[dl,dashed,"v_\#"]\rar & 1\\
& \pi_1(\T^d) & &
\end{tikzcd}\end{equation}
The upper row comes from the long exact sequence. The trivial map on the left comes from the fact that the restriction of $r$ to each fibre induces the zero map on $H_1(\T^d;\R)$.

Our goal is to define $v_\#$. Once it is defined, from the $K(\pi,1)$-spaces fact (see, for example, Hatcher \cite{hatcher}, 1B.9), we get a unique $v$, up to homotopy, from any homomorphism $\pi_1(B)\to\pi_1(\T^d)$.

For $\alpha\in\pi_1(B)$, because $p_\#$ is surjective, there is a $\beta\in\pi_1(E)$ such that $p_\#\beta=\alpha$. We define $v_\#\alpha=r_\#\beta$.

To show this is well-defined, suppose there is another $\beta'$ such that $p_\#\beta'=p_\#\beta=\alpha$, we need to show that $r_\#\beta'=r_\#\beta$.

We know $p_\#(\beta'\beta^{-1})=1$ so $\beta'\beta^{-1}\in\ker p_\#=\ima i_\#$. There is a $\gamma\in\pi_1(\T^d)$ such that $i_\#\gamma=\beta'\beta^{-1}$. But $r_\#i_\#\gamma=r_\#\beta'\beta^{-1}=1$.
\end{proof}

\begin{lemma} Let $G:E\to E$ be a map that induces the identity $\id_*$ on $H_1(\T^d)$. Then $E$ admits an $\id_{\T^d}$-map.
\end{lemma}
\begin{proof} We prove the statement by induction on $k$ for principal torus bundles over $\sph^k$. After showing for spheres, a general statement is true because manifolds have CW approximations and we can just do inductions on the dimension of skeletons.

For $k=1$, the bundle must be trivial, so there is trivially an $\id_{\T^d}$-map.

Now suppose the statement for the bundle over $\sph^k$ is true. Then for a bundle over $\sph^{k+1}$, we consider the clutching construction. The bundle splits into two trivial bundles over discs $\D^+$ and $\D^-$ of dimension $k+1$. The restrictions of the torus bundle to the boundary spheres $\sph^k$ of $\D^{+/-}$ each admits an $\id_{\T^d}$-map.

Let us denote the $\id_{\T^d}$-map $g:\sph^{k}\times\T^d\to\sph^{k}\times\T^d$. Parametrize the disc $\D^{k+1}$ by $(\rho,\theta)$, where $\rho$ gives the radius and $\theta$ is a $k$-dimensional vector of angles. In this coordinate, $\rho =1$ is the boundary shpere, and $g(\theta,y)=(\theta,\id+v(\theta))$ on the boundary. The problem actually reduces to the extension of $v(\theta)$ to the entire disc.

However, because the pair $(\D^{k+1}\times\T^d,\sph^{k}\times\T^d)$ is a good pair, which has the homotopy extension property, we are able to find a homotopy $h:\D\times\T^d\times I\to \D\times\T^d$ such that $h(1,\theta,y,0)=g(\theta,y)$. Thus $V(\rho,\theta)=h(\rho,\theta,0,0)$ is a map that restricts to $v(\theta)$ on the boundary, where $y=0$ is just the zero section of the trivial bundle. Now $\id_{\D\times\T^d}+V$ is an $\id_{\T^d}$-map on half of the bundle. We can glue via the clutching map.
\end{proof}

As a consequence, we have the following proposition.

\begin{proposition}\label{homot} If $F:E\to E$ is a fibre-preserving map that induces the same automorphism $A$ on the homology of each fibre, then $F$ is homotopic to an $A$-map.
\end{proposition}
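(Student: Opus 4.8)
The plan is to combine the commutative diagram \eqref{diagram} with the lemma just established. Write $\Phi\colon E\to E$ for the composite of the three arrows in the top row of \eqref{diagram} that follow $F$, so that $\bar F=\Phi\circ F$. The first thing I would check is that $\Phi$ is a bundle isomorphism which is moreover an $A^{-1}$-map: the arrow $E\to f^*E$ covering $f^{-1}$ is inverse to the canonical pullback map $f^*E\to E$ and is an $\id_{\T^d}$-map; the arrow $f^*E\to A(E)$ is the $\id_{\T^d}$-bundle-isomorphism and $A(E)\to E$ the $A^{-1}$-map supplied by \cite{GORH} (Proposition 4.6 and Theorem 6.2); and composing an $\id_{\T^d}$-map, then an $\id_{\T^d}$-map, then an $A^{-1}$-map gives an $A^{-1}$-map, while a composite of bundle isomorphisms is a bundle isomorphism. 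Hence $\Phi^{-1}$ is a well-defined $A$-map.

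Next I would apply the preceding lemma to $F$: it yields $\bar F=\id_E+r$ with $r$ homotopic to $v\circ p$ for some $v\colon B\to\T^d$. Any homotopy $r_t$ from $r$ to $v\circ p$ then gives a homotopy $\bar F_t:=\id_E+r_t$ from $\bar F$ to the map $G_v$ defined by $G_v(e):=e+v(p(e))$; each $\bar F_t$ is fibre-preserving over $\id_B$ since the torus action preserves fibres, and $G_v$ is an $\id_{\T^d}$-map because $\T^d$ is abelian. Composing on the left with the fixed map $\Phi^{-1}$, I obtain
\[
F=\Phi^{-1}\circ\bar F\ \simeq\ \Phi^{-1}\circ G_v ,
\]
and since $\Phi^{-1}$ is an $A$-map and $G_v$ an $\id_{\T^d}$-map, their composite satisfies $(\Phi^{-1}\circ G_v)(e.g)=(\Phi^{-1}\circ G_v)(e).Ag$ for all $e\in E$, $g\in\T^d$, i.e.\ it is an $A$-map. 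This proves $F$ is homotopic to an $A$-map; and because $\Phi^{-1}$ is a bundle map while the $\bar F_t$ are fibre-preserving, the homotopy can be taken fibrewise, which is the refinement needed for Proposition \ref{semiii}.

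The one point requiring care — and the main obstacle — is the bookkeeping in the first paragraph: one must cite \cite{GORH} to guarantee that $A(E)$ exists and carries the $\id_{\T^d}$-maps and the $A^{-1}$-map drawn in \eqref{diagram}, which is legitimate precisely because $E$ is assumed to admit an $A$-map (and because $f$ is invertible, so that pulling back by $f^{-1}$ makes sense). Once that is granted, everything else is formal manipulation of maps that are affine along the fibres, with the structure-group constants multiplying as maps are composed.
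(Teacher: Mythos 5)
Your argument is correct and follows the same route as the paper: the paper likewise names the three maps after $F$ in the top row of \eqref{diagram} (as $g_1,g_2,g_3$ rather than a single $\Phi$), transports the homotopy $\id_E+r_t$ from $\bar F$ to $\id_E+v\circ p$ back through $g_1^{-1}g_2^{-1}g_3^{-1}$, and identifies the endpoint as an $A$-map. Your explicit verification that $\Phi^{-1}$ is an $A$-map and that the composite with the $\id_{\T^d}$-map $e\mapsto e+v(p(e))$ is again an $A$-map is exactly the bookkeeping the paper leaves implicit.
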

\begin{proof} From the previous lemmas, we have found a homotopy $\bar{H}:E\times I\to\T^d$ such that $\bar{H}(e,0)=r(e)$ where $\bar{F}(e)=\id_E(e)+r(e)$, and $\bar{H}(e,1)=v\circ p(e)$. Then if we define $\tilde{H}:E\times I\to E$ as $\tilde H(e,t)=\id_E(e)+\bar H(e,t)$, it gives a homotopy such that $\tilde{H}(e,0)=\bar{F}(e)$ and $\tilde{H}(e,1)=\id_E(e)+v\circ p(e)$.

If we name the maps in diagram (\ref{diagram}), from the left to right, the first $\id_{\T^d}$-map $g_1$, the second $\id_{\T^d}$-map $g_2$, and the $A^{-1}$-map $g_3$, then $H:=g_1^{-1}g_2^{-1}g_3^{-1}\tilde{H}:E\times I\to E$ gives a homotopy such that $H(e,0)=F(e)$ and $H(e,1)=G(e)$ for some $A$-map $G$.
\end{proof}

Then Proposition \ref{mprop} follows.

\begin{proof}[Proof of Proposition \ref{mprop}] By the remarks we made right after the proof of Lemma \ref{cones}, if $F:E\to E$ is fibrewise Anosov, then $F$ induces the same hyperbolic automorphism $A$ on the homology of the fibre $\T^d$. Therefore $F$ is homotopic to $G$, for some fibrewise affine Anosov diffeomorphism $G$ from Proposition \ref{homot}.
\end{proof}

\section{Proof of Theorem \ref{main}}\label{pofmain}

From the last section we get a homotopy $H$ between $F,G:E\to E$ where $G$ is a fibrewise affine Anosov diffeomorphism. First we construct the $h$ as stated in Theorem \ref{main}. This is very similar to \cite[Proposition 2.1]{franks}.

For any two maps $F,G: E\to E$ on a principal torus bundle that covers the same map $f: B\to B$, again we let $r:E\to\T^d$ be the map such that $F(e)+r(e)=G(e)$. If $F\simeq G$ and we let $H:E\times I\to E$ denote the homotopy such that $H(e,0)=F(e)$ and $H(e,1)=G(e)$, then we can let $R:E\times I\to \T^d$ be the map such that $H(e,t)+R(e,t)=G(e)$. This is well-defined, and continuous in $e$ also by Lemma \ref{cts} and in $t$ because $H$ is continuous. We have $R(e,0)=r(e)$ and $R(e,1)=c$ where $c:E\to \T^d$ denotes a constant map.

Thus in the case where $F$ is homotopic to $G$, $r$ is nullhomotopic and it lifts to a map $\tilde{r}:E\to\R^d$. Now if $G$ is fibrewise affine Anosov with the matrix $A$, we can write $\tilde{r}=\tilde{r}^s+\tilde{r}^u$ where $\tilde{r}^s$ and $\tilde{r}^u$ take values in the stable and unstable subspaces of $A$, respectively.

\begin{proposition}\label{semii} Suppose $G:E\to E$ is a fibrewise affine Anosov diffeomorphism with the matrix $A$ as specified in the definition. For any $F:E\to E$ that is homotopic to $G$ and covers the same map on the base with $G$, there is a continuous surjective map $h:E\to E$ homotopic to $\id_E$ which fibres over $\id_B$ such that $h\circ F=G\circ h$.
\end{proposition}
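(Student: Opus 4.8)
The plan is to run Franks' construction from \cite[Proposition 2.1]{franks} fibrewise. Since $G$ is an $A$-map, I look for $h$ in the form $h(e)=e+u(e)$ with $u\colon E\to\T^d$, where ``$+$'' is the torus action along the fibres. Using $G(e+v)=G(e)+Av$ together with the defining relation $F(e)+r(e)=G(e)$ of the map $r\colon E\to\T^d$, one has $h\circ F(e)=F(e)+u(F(e))$ and $G\circ h(e)=F(e)+r(e)+A\,u(e)$, so, cancelling $F(e)$ (legitimate since the action is free), the identity $h\circ F=G\circ h$ is equivalent to $u\circ F=r+A\,u$ in $\T^d$. Because $F\simeq G$, the map $r$ is nullhomotopic (as observed above) and lifts to a continuous $\tilde r\colon E\to\R^d$, whose continuity is Lemma \ref{cts}. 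It therefore suffices to find a continuous $\tilde u\colon E\to\R^d$ solving the twisted cohomological equation
$$\tilde u\circ F-A\,\tilde u=\tilde r,$$
and then to set $u:=\tilde u\bmod\Z^d$, since reducing the displayed equation modulo $\Z^d$ returns $u\circ F=r+A\,u$.

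To solve the displayed equation I use hyperbolicity of $A$. Fix an adapted norm on $\R^d$, decompose $\tilde u=\tilde u^s+\tilde u^u$ and $\tilde r=\tilde r^s+\tilde r^u$ along the splitting of $\R^d$ into the stable and unstable subspaces of $A$, and let $A^s,A^u$ be the corresponding restrictions of $A$, so that $\|A^s\|<1$ and $\|(A^u)^{-1}\|<1$. The equation splits into $\tilde u^s\circ F-A^s\tilde u^s=\tilde r^s$ and $\tilde u^u\circ F-A^u\tilde u^u=\tilde r^u$, which I solve separately, each in the Banach space of continuous maps from the compact space $E$ into the stable, resp. unstable, subspace of $A$, with the supremum norm. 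Rewriting the unstable equation as $\tilde u^u=(A^u)^{-1}\bigl(\tilde u^u\circ F-\tilde r^u\bigr)$ exhibits the right-hand side as a map with Lipschitz constant $\|(A^u)^{-1}\|<1$ (precomposition with $F$ does not increase the supremum norm, as $F$ maps $E$ into $E$), so the contraction mapping principle yields a unique solution, explicitly $\tilde u^u=-\sum_{n\ge1}(A^u)^{-n}\,\tilde r^u\circ F^{\,n-1}$. Symmetrically, rewriting the stable equation as $\tilde u^s=A^s\bigl(\tilde u^s\circ F^{-1}\bigr)+\tilde r^s\circ F^{-1}$ gives a contraction with constant $\|A^s\|<1$ and solution $\tilde u^s=\sum_{n\ge1}(A^s)^{\,n-1}\,\tilde r^s\circ F^{-n}$; both series converge because $\tilde r$ is bounded.

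It then remains to check that $h:=\id_E+(\tilde u\bmod\Z^d)$ has the stated properties. It is continuous and differs from $\id_E$ by a translation in each fibre, so it fibres over $\id_B$; the map $h_t(e):=e+(t\,\tilde u(e)\bmod\Z^d)$, $t\in[0,1]$, is a homotopy from $\id_E$ to $h$ through fibre-preserving maps, so $h\simeq\id_E$; restricting $h$ to a fibre $p^{-1}(b)\cong\T^d$ gives a self-map homotopic to the identity, hence of degree one, hence onto, so $h$ is surjective; and $h\circ F=G\circ h$ holds by the choice of $\tilde u$.

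The step I expect to be the main obstacle is the stable half of the cohomological equation when $F$ is not a homeomorphism. The unstable contraction above uses only $F$, but the stable one uses $F^{-1}$, and for a genuinely non-invertible $F$ the operator $\tilde u^s\mapsto\tilde u^s\circ F-A^s\tilde u^s$ need not be surjective onto the continuous maps into the stable subspace of $A$, so this route to $\tilde u^s$ breaks down and some extra input is required (for instance passing to the natural extension of $(E,F)$, on which the relevant map is invertible, and then arguing that the construction descends). In the setting of Theorem \ref{main} this is a non-issue: there $F$ is a fibrewise Anosov diffeomorphism, hence a homeomorphism, and the argument above applies as written. The only other point requiring care is bookkeeping: the fibrewise identifications with $\T^d$ coming from the local trivializations must be globally compatible with both $G(e+v)=G(e)+Av$ and the stable/unstable splitting of $\R^d$, which is precisely what ``$G$ is an $A$-map with hyperbolic $A$'' encodes and needs no new idea.
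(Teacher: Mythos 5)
Your proposal is correct and follows essentially the same route as the paper: both write $h=\id_E+u$ with $u$ obtained by splitting the lifted cocycle $\tilde r$ along the stable/unstable subspaces of $A$ and summing the same geometric series (the paper writes the series directly with tail estimates for continuity, you package it as a contraction in $C(E,\R^d)$, which is the same computation). Your remark that the stable half needs $F^{-1}$ is a fair observation, and the paper's proof has the same implicit invertibility assumption; as you note, it is harmless in the setting of Theorem \ref{main}.
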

\begin{proof} We define $\tilde w^s, \tilde w^u:E\to E$,
$$\tilde{w}^s(e)=-\sum_{n=0}^\infty A^n\tilde{r}^s(F^{-(n+1)}(e)),\ \ \text{and}\ \ \ \tilde{w}^u(e)=\sum_{n=0}^\infty A^{-(n+1)}\tilde{r}^u(F^{n}(e)).$$
Then we have
$$ A\tilde{w}^s(e)-\tilde{w}^s(F(e))
=-\sum_{n=0}^\infty A^{n+1}\tilde{r}^s(F^{-(n+1)}(e))+\sum_{n=0}^\infty A^{n}\tilde{r}^s(F^{-n}(e))
=\tilde{r}^s(e).$$
Similarly we can check that $A\tilde{w}^u(e)-\tilde{w}^u(F(e))=\tilde{r}^u(e)$. Let $\tilde{w}=\tilde{w}^s\oplus\tilde{w}^u$. Then we have $A\tilde{w}(e)-\tilde{w}(F(e))=\tilde{r}(e)$.

We show that $\tilde{w}:E\to E$ is continuous. Let $\R^d$ be endowed with the Riemannian metric lifted from $\T^d$. Let $d_E$ denote the metric on $E$ and $d_{\R^d}$ the metric on $\R^d$. Since $E$ is assumed to be compact, there is a uniform bound $M$ of the distance from $\tilde{r}(e)$ to $0\in\R^d$ for all $e\in E$. For a given $\varepsilon>0$, take $N\in\N$ so we have $\sum_{n>N}\lambda^nM<\varepsilon/4$. Then since both $-\sum_{n=0}^NA^n\tilde{r}^s(F^{-(n+1)}(e))$ and $\sum_{n=0}^N A^{-(n+1)}\tilde{r}^u(F^{n}(e))$ are uniformly continuous, there is a $\delta>0$ such that if $d_E(e_1,e_2)<\delta$ for any $e_1,e_2\in E$, we have both $$d_{\R^d}\Big(\sum_{n=0}^N A^n\tilde{r}^s(F^{-(n+1)}(e_1)),\sum_{n=0}^N A^n\tilde{r}^s(F^{-(n+1)}(e_2))\Big)<\varepsilon/4$$
and
$$d_{\R^d}\Big(\sum_{n=0}^N A^{-(n+1)}\tilde{r}^u(F^{n}(e_1)),\sum_{n=0}^N A^{-(n+1)}\tilde{r}^u(F^{n}(e_2))\Big)<\varepsilon/4$$
Then $d_{\R^d}(\tilde{w}(e_1),\tilde{w}(e_2))<\varepsilon$.

We can then project $\tilde{w}:E\to\R^d$ and get $p_1\tilde{w}=w:E\to\T^d$, where $p_1:\R^d\to\T^d$ denotes the covering projection. Note $p_1:\R^n\to\T^n$ is linear. Then
$$Aw(e)-w(F(e))=Ap_1\tilde{w}(e)-p_1\tilde{w}(F(e))=p_1A\tilde{w}(e)-p_1\tilde{w}(F(e))=p_1\tilde{r}(e)=r(e).$$

Let $h: E\to E$ be such that $h(e)=e+w(e)$, which obviously covers the identity. We have
\begin{align*} G\circ h(e)&=G(e+w(e))=G(e)+Aw(e)
=G(e)+r(e)-r(e)+Aw(e)\\
&=F(e)+w(F(e))=h\circ F(e).
\end{align*}
Since $\tilde{w}$ is bounded, it is homotopic to a constant map $E\to\R^d$, and thus $h$ is homotopic to the identity via a homotopy that preserves each fibre. Then $h$ induces a map of nonzero degree on the top homology, and thus it is surjective.
\end{proof}

We denote $\tilde{F}_b:\R_b^d\to\R^d_{f(b)}$ the lift of the restriction $F_b:\T^d_b\to\T_{f(b)}^d$. We use $\widetilde{W}^s_{\tilde{F}_b}(\tilde{e})$ and $\widetilde{W}^u_{\tilde{F}_b}(\tilde{e})$ to denote the stable and unstable leaves of $\tilde{F}_b$ passing through a point $\tilde{e}$ in $\R^d_b$. We will also use $\widetilde{W}^s_{\tilde{F}_b}(\tilde{F}_b^n(\tilde{e}))$ and $\widetilde{W}^u_{\tilde{F}_b}(\tilde{F}_b^n(\tilde{e}))$ to denote the leaves along an orbit of $b$ lifted fibrewise. If it is clear that the leaves of which map we are talking about, we would also write $\widetilde W_b^s(\tilde e)$ and $\widetilde W_b^u(\tilde e)$.

Let $d(s;\cdot,\cdot)$ and $d(u;\cdot,\cdot)$ denote the distance between two points along stable and unstable leaves respectively.

If any pair of stable and unstable leaves of $\tilde{F}_b$ has a unique intersection, we say that we have the global product structure (GPS) of the leaves of $\tilde{F}_b$ in the fibre over $b$.

We want to show that the global product structure at all $b\in B$ in order to prove the injectivity of $h$ (see Lemma \ref{hinj}). Before that we check the following properties of either $h$ or the fibrewise lift $\tilde h_b:\R^d_b\to\R^d_b$, which we will use repeatedly.

\begin{property}\label{hpreservleaves} The restriction $h_b$ of $h$ to a fibre maps a stable (or an unstable) leaf of $F_b$ to a stable (or an unstable) leaf of $G_b$.
\end{property}
\begin{proof} It is sufficient to show that $h_b$ takes a local stable disk to a local stable disk, i.e. for any $e\in\T^d_b$, if $e' \in W^s_{F_b,\varepsilon}(e)$, we will have $h_b(e')\in W^s_{G_b,\varepsilon}(h_b(e))$, for some $\varepsilon>0$.

Suppose the statement is not true, say $h_b(e')\notin W^s_{G_b,\varepsilon}(h_b(e))$. Then
$$d_{\T_{f^n(b)}^d}(G^n_b(h_b(e)),G^n_b(h_b(e')))=d_{\T_{f^n(b)}^d}(h_{f^n(b)}F^n_b(e),h_{f^n(b)}F^n_b(e'))\to 0,$$
as $n\to\infty$. But the $G_b$ is the fibrewise affine Anosov diffeomorphism which has straight leaves in the eigen-direction of $A$, so the left hand side of the equation goes to infinity. We have reached a contradiction.
\end{proof}

\begin{remark} From the the above we also have that the lift $\tilde h_b$ takes a stable (or an unstable) leaf of $\tilde F_b$ to a stable (or an unstable) leaf of $\tilde G_b$.
\end{remark}

\begin{property}[(\cite{franks})]\label{hproper} Since $\tilde h_b$ is homotopic to the identity along the fibres, it is proper.
\end{property}

We claim that the existence of an intersection for any pair of stable and unstable leaves in the fibrewise covering spaces follows exactly from Lemma 1.6 of Franks \cite{franks1}, which we include for completeness. Since the proof there is rather long, we only include a sketch. Note that we are only going to use the local product structure and the semiconjugacy. This works for any point $b\in B$.

\begin{lemma} For any points $\tilde e,\tilde e'\in \R_b^d$, we have $\widetilde W^s_{\tilde F_b}(\tilde e)\cap \widetilde W^u_{\tilde F_b}(\tilde e')\neq\emptyset$ and $\widetilde W^u_{\tilde F_b}(\tilde e)\cap \widetilde W^s_{\tilde F_b}(\tilde e')\neq\emptyset$.
\end{lemma}
\begin{proof}[Sketch of proof] Fix a stable leaf $S$ of $\tilde F_b$, and we want to show that the set
$$Q:=\{\tilde e\ |\ \widetilde W^u_{\tilde F_b}(\tilde e)\cap S\neq\emptyset\}$$
is $\R^d_b$. Because of the local product structure, there is an open set $O$ that contains $S$ such that $Q=\{\tilde e\ |\ \widetilde W^u_{\tilde F_b}(\tilde e)\cap O\neq\emptyset\}$. If $\tilde e\in Q$, there is a foliated neighborhood $U$ containing $\tilde e$ which we could extend to a path of foliated neighborhoods whose union contains $\widetilde W^u_{\tilde F_b}(\tilde e)$, so $U\subset Q$. Thus $Q$ is open. We want to show that $Q$ is also closed.

Suppose $x$ is a point in the closure of $Q$ but not in $Q$. We show that $\widetilde W^u_{\tilde F_b}(x)\cap S\neq\emptyset$. Because of the local product structure, we are able to find a sequence of points in $Q$ approaching $x$ which is also contained in the same stable leaf $\widetilde W^s_{\tilde F_b}(x)$. We pick one point from the sequence and denote it by $y$. Pick another point $z$ in $\widetilde W^u_{\tilde F_b}(y)\cap S$. We connect $y$ and $z$ by a path $\gamma:[0,1]\to\widetilde W^u_{\tilde F_b}(y)$ and connect $x$ and $y$ by a path $\rho:[0,1]\to\widetilde W^s_{\tilde F_b}(x)=\widetilde W^s_{\tilde F_b}(y)$.

We aim to define a function $\theta:[0,1]\times [0,1]\to\R_b^d$ such that $\theta(r,0)=\gamma(r)$ and $\theta(0,t)=\rho(t)$, and $\theta(r,t)=\widetilde W^s_{\tilde F_b}(\gamma(r))\cap \widetilde W^u_{\tilde F_b}(\rho(t))$. If we are able to define $\theta$ at $(1,1)$, then $\widetilde W^s_{\tilde F_b}(z)\cap \widetilde W^u_{\tilde F_b}(x)=\widetilde W_{\tilde F_b}^u(x)\cap S\neq\emptyset$.
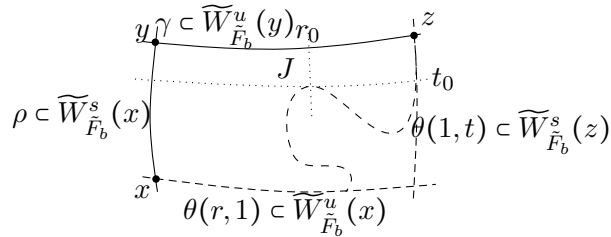
\begin{figure}[H]
    \centering
    \begin{tikzpicture}
  \draw plot[smooth, tension=0.7] coordinates{(0,2.2) (-0.1,1) (0,0)};
  \node at (-1,1) {$\rho\subset\widetilde W^s_{\tilde F_b}(x)$};
  \draw plot[smooth, tension=0.8] coordinates{(-.2,2) (1.7,1.9) (3.5,2.1)};
  \node at (.9,2.2) {$\gamma\subset\widetilde W^u_{\tilde F_b}(y)$};
  \draw[densely dashed] plot[smooth, tension=0.7] coordinates{(3.4,2.3) (3.45,1.2) (3.4,-.1)};
  \node at (4.7,.8) {$\theta(1,t)\subset\widetilde W^s_{\tilde F_b}(z)$};
  \draw[densely dashed] plot[smooth, tension=0.7] coordinates{(-.2,.2) (1.8,0) (3.7,.1)};
  \node at (1.7,-.3) {$\theta(r,1)\subset \widetilde W^u_{\tilde F_b}(x)$};
    \node at (-.02,.17)[circle,fill,inner sep=1pt]{};
    \node at (-.2,0) {$x$};
    \node at (-.03,1.99)[circle,fill,inner sep=1pt]{};
    \node at (-.2,2.1) {$y$};
    \node at (3.41,2.08)[circle,fill,inner sep=1pt]{};
    \node at (3.6,2.3) {$z$};
    \draw[dashed] plot[smooth, tension=0.9] coordinates{(3.41,2.08) (3.2,.8) (2,1.4) (1.8,.5) (2.5,.3) (2.5,0)};
    \draw[dotted] plot[smooth,tension=.9] coordinates{(-.3,1.5) (2,1.4) (3.6, 1.5)};
    \node at (3.8,1.5) {$t_0$};
    \draw[dotted] plot[smooth,tension=.9] coordinates{(2,2.2) (2.05,1)};
    \node at (2,2.1) {$r_0$};
    \node at (1.7,1.65) {$J$};
\end{tikzpicture}
\caption{Definition of $\theta$} \label{figexistence}
\end{figure}
Because of the local product structure, the set where $\theta$ can be defined is open. Now let
$$t_0=\sup\{\hat t\ |\ \theta(r,t) \text{ is defined for } 0 \leq r\leq 1\text{ and } 0\leq t\leq \hat t\},\ \text{and}$$
$$r_0=\sup\{\hat r\ |\ \theta(r,t_0)\text{ is defined for } 0\leq r\leq\hat r\}.$$
If we let $J:=\{(r,t)\ |\ t<t_0\}$, $\tilde h_b(\theta(J))$ is bounded because the projections of it to the linear leaves $\widetilde W^u_{\tilde G_b}(\tilde h_b(y))$ and $\widetilde W^s_{\tilde G_b}(\tilde h_b(y))$ are bounded. Then $\theta(J)$ is bounded because $\tilde h_b$ is proper. Next pick a sequence $\{(r_n,t_n)\ |\ r_n\leq r_{n+1}\}$ in $J$ converging to $(r_0,t_0)$ such that $\theta(r_n,t_n)$ converges to a point $w\in\R^d_b$. We define $\theta(r_0,t_0):=w$, by continuity. 
What is left is to check that $\theta$ is well-defined.

Let $N$ be a product neighborhood of $w$. We can assume $\theta(r_n,t_n)\in N$ for all $n>0$.
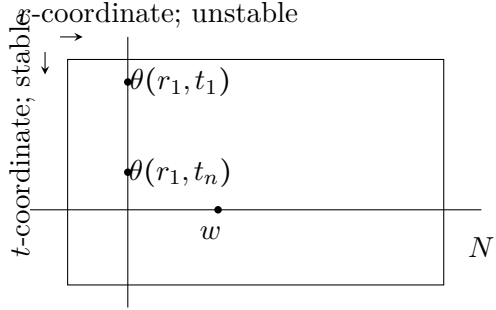
\begin{figure}[H]
    \centering
    \begin{tikzpicture}[>=stealth]
  \draw (0,0) rectangle (5,3);
    \node at (2,1)[circle,fill,inner sep=1pt]{};
    \node at (1.9,.7) {$w$};
    \node at (5.5,.5) {$N$};
    \draw (-.5,1) -- (5.5,1);
    \draw (.8,3.3) -- (.8,-.3);
    \node at (.8,2.7)[circle,fill,inner sep=1pt]{};
    \node at (1.5,2.7) {$\theta(r_1,t_1)$};

    \node at (.8,1.5)[circle,fill,inner sep=1pt]{};
    \node at (1.5,1.5) {$\theta(r_1,t_n)$};
    \draw[->] (-.1,3.3) -- (.2,3.3);
    \draw[->] (-.3,3.1) -- (-.3,2.8);
    \node at (1.2,3.6) {$r$-coordinate; unstable};
    \node at (-.6,2) [rotate=90] {$t$-coordinate; stable};

\end{tikzpicture}
\caption{Extending $\theta$ to $(r_0,t_0)$} \label{theta}
\end{figure}
Since $\theta$ is defined at $(r_n,t_n)$ for $n>0$ and is by definition $\widetilde W^s_{\tilde F_b}(\gamma(r_n))\cap\widetilde W^u_{\tilde F_b}(\rho(t_n))$. Thus $\theta(r_1,t_n)$ lies on $\widetilde W^s_{\tilde F_b}(\gamma(r_1))$ for all $n>0$. Then $\theta(r_1,t_n)$ converges to $\widetilde W^s_{\tilde F_b}(\theta(r_1,t_1))\cap \widetilde W^u_{\tilde F_b}(w)$. (Suppose not. If $\theta(r_1,t_n)$ converges to $\bar w:=\widetilde W^s_{\tilde F_b}(\theta(r_1,t_1))\cap\widetilde W^u(w')$ for a point $w'\neq w$, then $\theta(r_n,t_n)$ would converge to a point inside $\widetilde W^u_{\tilde F_b}(\bar w)\neq \widetilde W^u_{\tilde F_b}(w)$.) Similarly, $\theta(r_n,t_1)$ converges to $\widetilde W^s(w)_{\tilde F_b}\cap\widetilde W^u_{\tilde F_b}(\theta(r_1,t_1))$. Thus it is either defined already so we have, or we could define that
$$\theta(t_1,r_0)=\widetilde W^s_{\tilde F_b}(\theta(r_1,t_1))\cap \widetilde W^u_{\tilde F_b}(w),\ \ \theta(t_0,r_1)=\widetilde W^s(w)_{\tilde F_b}\cap\widetilde W^u_{\tilde F_b}(\theta(r_1,t_1)).$$

Now suppose somehow we are able to define for another sequence $\{(r_n',t_n')\}$ that converges to $(r_0,t_0)$ such that $\theta(r_0,t_0)=\lim_{n\to\infty}\theta(r_n',t_n')$. Then we have
\begin{align*} \lim_{n\to\infty}\theta(r_n',t_n') &= \lim_{n\to\infty}\widetilde W^u_{\tilde F_b}(\theta(r_1,t_n'))\cap \widetilde W^s_{\tilde F_b}(\theta(r_n',t_1)) \\
 &=\widetilde W^u_{\tilde F_b}(\theta(r_1,t_0))\cap\widetilde W^s_{\tilde F_b}(\theta(r_0,t_1))=w.
\end{align*}
This is saying that we could always extend $\theta$ to a limit point, so $Q$ is closed.
\end{proof}

Next we are going to show the uniqueness of the intersection for each pair of stable and unstable leaves.

\begin{remark} Since we have already showed the existence of the intersections, if we take any pair of stable and unstable leaves, they must intersect, say in a point $\tilde e\in p^{-1}b$. It is then sufficient to show that this pair of stable and unstable leaves has no other intersections besides $\tilde e$.

On the other hand, for any point $\tilde e\in p^{-1}b$, there is a pair of stable and unstable leaves that intersects at $\tilde e$, namely, $\widetilde W^s_{\tilde F_b}(\tilde e)$ and $\widetilde W^u_{\tilde F_b}(\tilde e)$, which are exactly the pair of leaves that we considered in the above paragraph.

Therefore to show the unique intersection for an arbitrary pair of leaves, we only need to show that for any $\tilde e$, $\widetilde W^s_{\tilde F_b}(\tilde e)\cap \widetilde W^u_{\tilde F_b}(\tilde e)=\{\tilde e\}$.
\end{remark}

To summarize the steps of the proof:
\begin{itemize}\item[1. ] If $b$ is a recurrent point of $f$ in the base, then in $p^{-1}b$ any pair of stable and unstable leaves has a unique intersection. This is Lemma \ref{recgps}.
\item[2.] We show that the set
$$\{b\in B\ |\ \text{The foliations induced by $F_b$ has the global product structure.} \}$$
is open (with a uniform constant $\delta$ such that at every point where we already have GPS, any point in a ball with radius $<\delta$ has GPS). This is Corollary \ref{gpsopen}.
\item[3.] Corollary \ref{gpsopen} also implies that the above set is closed because the $\delta$ is uniform. This is Corollary \ref{gpsclosed}.
\end{itemize}

Now we proceed with the proof.

\begin{lemma}\label{recgps} Let $b\in B$ be a recurrent point of $f$. Then $\widetilde{W}^u_{\tilde{F}_b}(\tilde{e})\cap \widetilde{W}^s_{\tilde{F}_b}(\tilde{e})=\{\tilde e\}$.
\end{lemma}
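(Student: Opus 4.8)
The plan is the following. The existence of an intersection point is granted (Franks \cite{franks1}), so I only need uniqueness: if $\tilde p\in\tilde W^u_{\tilde F_b}(\tilde e)\cap\tilde W^s_{\tilde F_b}(\tilde e)$, then $\tilde p=\tilde e$. Since $\tilde p$ and $\tilde e$ lie on a common stable leaf of $\tilde F_b$, one has $d(s;\tilde F_b^n\tilde p,\tilde F_b^n\tilde e)\to 0$ as $n\to+\infty$, and since they lie on a common unstable leaf, $d(u;\tilde F_b^{-n}\tilde p,\tilde F_b^{-n}\tilde e)\to 0$ as $n\to+\infty$. On each fibrewise universal cover the ambient (flat) distance is bounded above by the corresponding intrinsic leaf distance, up to a constant uniform over $B$ by compactness; and because the structure group consists only of translations, the difference $\delta_n:=\tilde F_b^n\tilde p-\tilde F_b^n\tilde e$ is a well-defined vector in $\R^d$, independent of the trivialization. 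Hence $\|\delta_n\|\to 0$ as $n\to\pm\infty$, so $K:=\sup_{n\in\Z}\|\delta_n\|<\infty$.

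Next I invoke the semiconjugacy. As $A$ is hyperbolic, Corollary \ref{semi} gives $h\circ F=G\circ h$ with $G$ fibrewise affine Anosov (linear part $A$) and $h$ fibring over $\id_B$; its fibrewise lift has the form $\tilde h_c=\id+\tilde w_c$ where $\tilde w$ is bounded uniformly over $E$, say by $M'$. Iterating and lifting fibrewise gives $\tilde h_{f^n(b)}\circ\tilde F_b^n=\tilde G_b^n\circ\tilde h_b$, with $\tilde G_b^n$ affine of linear part $A^n$. Subtracting this identity evaluated at $\tilde p$ from the one evaluated at $\tilde e$, the affine translation parts cancel and we obtain $A^n(\tilde h_b\tilde p-\tilde h_b\tilde e)=\delta_n+\tilde w_{f^n(b)}(\tilde F_b^n\tilde p)-\tilde w_{f^n(b)}(\tilde F_b^n\tilde e)$, whence $\|A^n(\tilde h_b\tilde p-\tilde h_b\tilde e)\|\le K+2M'$ for all $n\in\Z$. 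Since $A$ is hyperbolic, no nonzero vector of $\R^d$ has a bounded two-sided orbit under the powers of $A$, so $\tilde h_b\tilde p=\tilde h_b\tilde e$.

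It remains to upgrade $\tilde h_b\tilde p=\tilde h_b\tilde e$ to $\tilde p=\tilde e$, and here the recurrence of $b$ enters. From the last equality and the semiconjugacy, $\tilde h_{f^n(b)}(\tilde F_b^n\tilde p)=\tilde h_{f^n(b)}(\tilde F_b^n\tilde e)$ for all $n$, so $\|\delta_n\|\le 2M'$ for every $n\in\Z$: the two full $\tilde F_b$-orbits remain within bounded ambient distance. Pick a subsequence $n_k\to\infty$ with $f^{n_k}(b)\to b$, refined so that also $d(s;\tilde F_b^{n_k}\tilde p,\tilde F_b^{n_k}\tilde e)$ is as small as we please; the points $\tilde F_b^{n_k}\tilde p,\tilde F_b^{n_k}\tilde e$ still lie on a common stable leaf and on a common unstable leaf. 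By transversality of $V^s$ and $V^u$, continuity, and compactness of $B$, there is a neighborhood of $p^{-1}(b)$ carrying a uniform local product structure for the stable and unstable foliations; for $k$ large enough $f^{n_k}(b)$ lies in the corresponding base neighborhood and $\tilde F_b^{n_k}\tilde p$ lies in the local product chart of $\tilde F_b^{n_k}\tilde e$, forcing $\tilde F_b^{n_k}\tilde p=\tilde F_b^{n_k}\tilde e$ and therefore $\tilde p=\tilde e$. Alternatively, one may argue through the autonomous Anosov return map $\bar F_b^N=\varphi_b\circ\psi\circ F_b^N$ of Section \ref{homotopy}: by Franks \cite{franks1} and Manning \cite{manning1} it is conjugate, via a homeomorphism of $\R^d$ at bounded distance from the identity, to a hyperbolic automorphism whose stable and unstable leaves are affine cosets with unique intersections, so once one checks that the leaves of $\tilde{\bar F}_b^N$ through $\tilde e$ coincide with those of $\tilde F_b$, its global product structure gives $\tilde p=\tilde e$ directly.

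The main obstacle is this last step: controlling the distortion of the non-autonomous composition $\tilde F_b^n$ well enough that ``$\tilde p$ and $\tilde e$ have boundedly close two-sided $\tilde F_b$-orbits'' forces $\tilde p=\tilde e$ --- equivalently, verifying that the stable and unstable leaves of $\tilde F_b$ over the recurrent point $b$ agree with those of the autonomous return map $\bar F_b^N$, so that Franks' structure for a single Anosov diffeomorphism (or the uniform local product structure near $p^{-1}(b)$) applies. Everything before it --- the contraction estimates, the well-definedness of $\delta_n$ as a vector in $\R^d$, and the hyperbolicity argument killing $\tilde h_b\tilde p-\tilde h_b\tilde e$ --- is routine.
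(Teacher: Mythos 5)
Your reduction to ``the two orbits stay a bounded distance apart for all $n\in\Z$'' is fine, and in fact the semiconjugacy detour through $A^n(\tilde h_b\tilde p-\tilde h_b\tilde e)$ is superfluous: since $\tilde p$ and $\tilde e$ lie on a common stable leaf and a common unstable leaf, the leafwise contraction already gives $\|\delta_n\|\to 0$ as $n\to+\infty$ and as $n\to-\infty$, which is stronger than what the hyperbolicity-of-$A$ argument returns. The problem is that all of the difficulty of the lemma sits in your final step, and the step as written does not close. Having $\tilde F_b^{n_k}\tilde p$ in the local product chart of $\tilde F_b^{n_k}\tilde e$, on its \emph{global} stable leaf and on its \emph{global} unstable leaf, does not force the two points to coincide: the local product structure only identifies intersections of \emph{local} stable and unstable disks, and while $d(s;\tilde F_b^{n_k}\tilde p,\tilde F_b^{n_k}\tilde e)\to 0$, the unstable arc joining the two points has length at least of order $\lambda^{-n_k}d(u;\tilde p,\tilde e)\to\infty$, so $\tilde F_b^{n_k}\tilde p$ is emphatically \emph{not} on the local unstable disk of $\tilde F_b^{n_k}\tilde e$. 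Ruling out a long unstable excursion that re-enters the local stable disk is exactly the global product structure you are trying to prove, so this version of the argument is circular. (There is also no time $n$ at which both leaf distances are simultaneously small, since one contracts while the other expands.)

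Your parenthetical ``alternative'' is in fact the paper's actual proof, but the clause ``once one checks that the leaves of $\tilde{\bar F}_b^N$ coincide with those of $\tilde F_b$'' is not a checkable side condition --- the leaves do not coincide, and making the comparison work is the entire content of the argument. The paper uses recurrence to produce an autonomous return map $\bar F_b^{N}=\varphi_b\circ\psi\circ F_b^{N}$ of the single fibre over $b$, which is an honest Anosov diffeomorphism of $\T^d$ and hence, by Franks, has global product structure in $\R^d$. Both its invariant bundles and those of $\tilde F_b$ lie in the same thin invariant cones, so after enough returns the $\bar F_b$-stable leaf through $\tilde F_b^{n_{L'}}(\tilde e)$ lies in an $\varepsilon$-normal neighborhood of the $\tilde F_b$-stable leaf, and the $\bar F_b$-unstable leaf through the same point shadows the $\tilde F_b$-unstable leaf back to a neighborhood of the second intersection point $\tilde F_b^{n_{L'}}(\tilde e')$; the local product structure of $\bar F_b$ then manufactures a second intersection of the $\bar F_b$-leaves, contradicting Franks. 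You correctly identify this comparison as ``the main obstacle,'' but you do not carry it out, so the proposal is incomplete precisely at the step that constitutes the proof.
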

\begin{proof} We use the fact that the leaves are ``very close'' to the leaves of an Anosov diffeomorphism.

Assume $\widetilde{W}^u_{\tilde{F}_b}(\tilde{e}),\widetilde{W}^s_{\tilde{F}_b}(\tilde{e})$ intersect in two points, say $\tilde{e},\tilde{e}'$. Then $\tilde{F}^n_b(\tilde{e}')\in \widetilde{W}^u_{\tilde{F}_{b}}(\tilde{F}^n_b(\tilde{e}))\cap \widetilde{W}^s_{\tilde{F}_{b}}(\tilde{F}^n_b(\tilde{e}))$, which means the leaves along the orbit also intersect twice.

Let $\{f^{n_k}(b)\}$ denote the subsequence that converges to $b$. We take an $L$ large enough so that $f^{n_L}(b)$ comes back close to $b$ where we have the invariant cones with the constant $\gamma$ as in Lemma \ref{cones}. 
Let $\bar{F}^{n_L}_b$ denote the Anosov diffeomorphism that is $C^1$ close to $\tilde{F}^{n_L}_b$ as before.

We can assume a lower bound, say $5\varepsilon$, for the constant of the local product structure of each of $\{\bar{F}_b^{n_k}\}$ while $k>L$, because of the continuity of the leaves.

Now we take $L'\geq L$ large enough so we can have a small enough $\gamma'$ such that
$$d(s;\tilde{F}_b^{n_{L'}}(\tilde{e}),\tilde{F}_b^{n_{L'}}(\tilde{e}'))\cdot \gamma' < \varepsilon.$$
Since the local basis of $\bar{E}^s$ at each point lies in the same $\gamma'$-cone of $\tilde{E}^s$, this means $\widetilde{W}^s_{\bar{F}_b}(\bar{F}_b^{n_{L'}}(\tilde{e}))$ lies in the normal neighborhood $N(\widetilde{W}^s_{\tilde{F}_b}(\tilde{F}_b^{n_{L'}}(\tilde{e})),\varepsilon)$.

Now the unstable leaf $\widetilde{W}^u_{\bar{F}_b}(\bar{F}_b^{n_{L'}}(\tilde{e}))$ must pass through $N(\widetilde{W}^s_{\tilde{F}_b}(\tilde{F}_b^{n_{L'}}(\tilde{e})),\varepsilon)$ near $\tilde{F}_b^{n_{L'}}(\tilde{e}')$ because of the existence of the unstable cone. Then $\widetilde{W}^s_{\bar{F}_b}(\bar{F}_b^{n_{L'}}(\tilde{e}))$ and $\widetilde{W}^u_{\bar{F}_b}(\bar{F}_b^{n_{L'}}(\tilde{e}))$ should intersect the second time because of the local product structure, which is not the case. Thus $\widetilde{W}^u_{\tilde{F}_b}(\tilde{e})$ and $\widetilde{W}^s_{\tilde{F}_b}(\tilde{e})$ cannot intersect more than once.
\end{proof}

Next we prove a useful technical lemma, which is implicit in \cite{franks} (Proof of Theorem 1).

\begin{lemma}\label{distleaf1} Let $\F_1,\ \F_2$ be a pair of transverse foliations on $\T^d$ with continuous leaves and the local product structure, such that its lift $\tilde\F_i,\ i=1,2$ also has the global product structure on $\R^d$. Let $d_1,\ d_2$ denote the distance along a leaf of $\tilde\F_1,\ \tilde\F_2$ between two points in the same leaf, respectively. Then for any compact set $K\subset\R^d$, there is a constant $M_K>0$ such that
$$\sup\big\{d_1(\tilde{e},\tilde{e}'): \tilde{e},\tilde{e}'\in (K\cap\widetilde{W}^1(\tilde{e})),\ \widetilde{W}^1\text{ is a leaf of }\tilde\F_1 \big\}<M_K,\ \text{and}$$
$$\sup\big\{d_2(\tilde{e},\tilde{e}'): \tilde{e},\tilde{e}'\in (K\cap\widetilde{W}^2(\tilde{e})),\ \widetilde{W}^2\text{ is a leaf of }\tilde\F_2 \big\}<M_K,$$
i.e., the distance along the leaves are bounded in a compact set.
\end{lemma}
\begin{proof} We show it for $\tilde\F_2$. From the global product structure, we get a homeomorphism $\alpha=(\alpha_1,\alpha_2):\R^d\to \R^l\times\R^{d-l}$, where $\alpha_1,\alpha_2$ are the projections, $\tilde\F_2$ is of dimension $d-l$ and codimension $l$, and the latter $\R^l\times\R^{d-l}$ has the usual topology.

Suppose there is not such an upperbound $M_K$ for the distance along leaves. Then there exists a sequence of pairs $\{(\tilde{e}_n,\tilde{e}_n')\}$ such that $\tilde{e}_n,\tilde{e}_n'\in K\cap \widetilde{W}^2(\tilde{e}_n)$ and $d_1(\tilde{e}_n,\tilde{e}_n')>n$.

By the compactness of $K$, there is a subsequence $\{(\tilde{e}_{n_k},\tilde{e}'_{n_k})\}$ that converges to a pair of points $(\tilde{e}_0,\tilde{e}_0')$. Since $\alpha_1(\tilde{e}_n)=\alpha_1(\tilde{e}'_n)$ (because they are on the same leaf of $\tilde \F_2$), we know $\alpha_1(\tilde{e}_0)=\alpha_1(\tilde{e}_0')$, that is, $\tilde{e}'_0\in \widetilde{W}^2(\tilde{e}_0)$.

Now we can cover the path between $\tilde{e}_0$ and $\tilde{e}'_0$ that realizes $d_2(\tilde{e}_0,\tilde{e}'_0)$ in $\widetilde{W}^2(\tilde{e}_0)$ with a finite number of cubes with length of edges smaller than the constant of the local product structure, and denote the cubes by $\{P_1,\ldots, P_t\}$. Then $\alpha (\cup_{i=1}^t P_i)$ covers a neighborhood $B_\varepsilon \times \alpha_2(\widetilde{W}^2(\tilde{e}_0)\cap K)$ where $B_\varepsilon$ is a small $l$-dimensional disk. Thus $\alpha^{-1}(B_\varepsilon \times \alpha_2(\widetilde{W}^2(\tilde{e}_0)\cap K))$ also covers a piece between $\tilde{e}_{n_k}$ and $\tilde{e}'_{n_k}$ of $\widetilde{W}^2(\tilde{e}_{n_k})$ for all $k>L$ for some large $L$, because of the local product structure. But $d_2(\tilde{e}_{n_k},\tilde{e}'_{n_k})>k$. They cannot be all covered by a finite number of cubes of a fixed diameter. We have reached a contradiction.
\end{proof}

\begin{corollary}\label{distleaf2} Let $C\subset B$ be a compact set contained in a trivialized neighborhood of the bundle we consider. Consider the leaves of $F$ lifted continuously over $C$. Suppose we have the global product structure at all $b\in C$. Then for any compact set $K\subset C\times\R^d$, there is an $M_C>0$ such that
$$\sup_{b\in C}\,\sup\big\{d(u;\tilde{e},\tilde{e}'): \tilde{e},\tilde{e}'\in (K\cap\widetilde{W}^u_{b}(\tilde{e}))\big\}<M_C,\ \text{and}$$
$$\sup_{b\in C}\,\sup\big\{d(s;\tilde{e},\tilde{e}'): \tilde{e},\tilde{e}'\in (K\cap\widetilde{W}^s_{b}(\tilde{e}))\big\}<M_C.$$
\end{corollary}
\begin{proof} We are able to run the above argument, starting with a homeomorphism $\alpha= id\times(\alpha_1,\alpha_2):C\times \R^d\to C\times (\R^l\times\R^{d-l})$ because of the global product structure.

If we want to check for the unstable foliation, then instead of $B_\varepsilon \times \alpha_2(\widetilde{W}^2(\tilde{e}_0)\cap K)$ we consider a neighborhood $A\times B_\varepsilon \times \alpha_2(\widetilde{W}^u_{b_0}(\tilde{e}_0)\cap K)$ covered by a finite number of cubes given by the local product strucutre, where $p^{-1}b_0$ contains $\tilde e_0,\tilde e_0'$, $A$ is a small neighborhood around $b_0$ and $B_\varepsilon$ is an $l$-dimensional disk. The rest of the argument are the same.
\end{proof}

\begin{lemma}\label{hinj} If we have the global product structure at a point $b\in B$, then the restriction $\tilde h_b:\R^d_b\to\R^d_b$ is injective.
\end{lemma}
\begin{proof} Note that if we have the global product structure at one point $b$, then we also have it along the orbit of $b$, if considering the lifts fibrewise. We are then able to follow a similar argument to Franks \cite{franks1} to deduce the injectivity of $\tilde h_b$.

Recall that $\tilde h_b$ preserves the stable and unstable leaves (Property \ref{hpreservleaves}). Suppose after fixing the lifts, we can find $\tilde{e}\neq\tilde{e}'$ such that $\tilde{h}_b(\tilde{e})=\tilde{h}_b(\tilde{e}')$. Let $\tilde{e}'':=\widetilde{W}^u_{\tilde{F}_b}(\tilde{e})\cap \widetilde{W}^s_{\tilde{F}_b}(\tilde{e}')$. We know $\widetilde{W}^u_{\tilde{G}_b}(\tilde{h}_b (\tilde{e})) \cap \widetilde{W}^s_{\tilde{G}_b}(\tilde{h}_b (\tilde{e}'))=\tilde{h}_b(\tilde{e}'')$, i.e., they also intersect only once in $\tilde{h}_b(\tilde{e}'')$. Then $\tilde{h}_b(\tilde{e})=\tilde{h}_b(\tilde{e}')=\tilde{h}_b(\tilde{e}'')$. Thus we have found $\tilde{e},\tilde{e}''\in \widetilde{W}^u_{\tilde{F}_b}(\tilde{e})$ such that $\tilde{h}_b(\tilde{e})=\tilde{h}_b(\tilde{e}'')$.

Since $B$ is assumed to be a compact manifold (which is normal), for each finite open cover we can find a finite refinement that contains compact sets. Now we use compact trivialized neighborhoods, and $h_b$ is lifted continuously over each of them, denoted as $\tilde{h}_C$ on a compact set $C$. Then $\tilde{h}_C$ is homotopic to the identity along the fibres, and thus is proper (Property \ref{hproper}). Denote the set $(\id,\tilde{h}_C)^{-1}(C\times[0,1]^d)$ as $D$, which is compact.


Then by Corollary \ref{distleaf2} we have an $M_{C}>0$ such that
$$\sup_{b\in C}\,\sup\big\{d(u;\tilde{e},\tilde{e}''): \tilde{e},\tilde{e}''\in (\tilde{h}_b^{-1} [0,1]^d\cap \widetilde{W}^u_{\tilde{F}_b}(\tilde{e}))\big\}<M_{C}.$$
Then again by the compactness of $B$ there is a uniform bound $M>0$ such that
$$\sup_{b\in B}\,\sup\big\{d(u;\tilde{e},\tilde{e}''): \tilde{e},\tilde{e}''\in (\tilde{h}_b^{-1} [0,1]^d\cap \widetilde{W}^u_{\tilde{F}_b}(\tilde{e}))\big\}<M.$$

Denote $\tilde{w}_n:=\tilde{h}_{f^n(b)}\tilde{F}_{b}^n(\tilde{e})=\tilde{h}_{f^n(b)}\tilde{F}_{b}^n(\tilde{e}'')$. We can always use deck transformations to translate the $\tilde{w}_n$'s to $[0,1]^d$. Thus we should also have $d(u;\tilde{F}_{b}^n(\tilde{e}),\tilde{F}_{b}^n(\tilde{e}''))<M$ for all $n$. We have reached a contradiction.
\end{proof}

Note again the above shows that if at $b$ we have the global product structure, $\tilde{h}_b$ is a homeomorphism. It is surjective for free because it has nonzero degree on the top homology. In particular, $\tilde h_b$ is a homeomorphism for all recurrent points $b\in B$ now.

Then we show that the property of having the global product structure is open.

\begin{remark} Our main idea of the next proof is just to approximate the leaves where we do not know if they have the global product structure, with the leaves that already have the global product structure, with the existence of the cones.

However, since the universal cover is noncompact, we could quickly lose control of the error. Thus we emphasize the use of the semiconjugacy (now already a diffeomorphism at a nearby point where the GPS exists), which is bounded from the identity, so the nonlinear leaves is of a bounded distance from the linear leaves of our fibrewise affine Anosov diffeomorphism, which is essential to make the approximation work.
\end{remark}

\begin{proposition}\label{fol} Let $\F_1,\F_2$ be a pair of transverse foliations on $\T^d$ with $C^1$ leaves and the local product structure. Suppose the lifts $\tilde\F_i$ of $\F_i,\ i=1,2$ have the global product structure on $\R^d$. Let $\F_1'$ and $\F_2'$ be another pair of transverse foliations.

Then there exists an $\alpha>0$ such that,  if
\begin{itemize}
    \item[1.] $\max_x\angle(T\F_i(x),T\F_i'(x))<\alpha$ for $i=1,2$;
    \item[2.] there is a homeomorphism $h:\T^d\to\T^d$ which is homotopic to the identity and takes $\F_i,\ i=1,2$ to another pair of linear foliations $\F_i'',\ i=1,2$,
\end{itemize}
then the pair of lifts $\tilde\F'_i$ of $\tilde\F_i',\ i=1,2$ has the global product structure.
\end{proposition}

\begin{remark} We tried to state this in a more general manner. In our setting, the $\F_i$'s are the foliations of the fibrewise Anosov diffeomorphism at a point, say $b$, where we do not have the GPS yet, the $\F'_i$'s are the foliations of $F$ at a point that is close to $b$ where we have the GPS, and $\F''_i$'s are the foliations of $G$ with straight leaves.

Note that this is purely a statement about the foliations which has nothing to do with the dynamics.
\end{remark}

\begin{proof}[Proof of Proposition \ref{fol}] We first fix the notations. We will denote leaves at $\tilde e$ of $\tilde\F_i$ by $\widetilde W_1^i(\tilde e)$ and leaves of $\tilde\F_i'$ by $\widetilde W_2^i(\tilde e)$, $i=1,2$. Then for any $\tilde e_0\in \R^d$, we want to show that $\widetilde W^1_2(\tilde e_0)\cap\widetilde W^2_2(\tilde e_0)=\{\tilde e_0\}$. We start from selecting constants:
\begin{itemize}
    \item $\varepsilon$: Let $\varepsilon>0$ be smaller than the constant of the local product structure.
    \item $M_1$: Let $M_1$ be the constant such that $\|h-\id\|< M_1$.
    \item $D$: We fix a $D>10(M_1+\varepsilon)$.
    \item $M_2$: Suppose $K$ is a compact set of diameter $\leq D$. There is an $M_2$ such that
    $$\sup\big\{d_1(\tilde{e},\tilde{e}'): \tilde{e},\tilde{e}'\in (K\cap \widetilde{W}^1_1(\tilde{e})),\ \widetilde{W}^1_1\text{ is a leaf of }\tilde\F_1 \big\}<M_2,\ \text{and}$$
    $$\sup\big\{d_2(\tilde{e},\tilde{e}'): \tilde{e},\tilde{e}'\in (K\cap \widetilde{W}^2_1(\tilde{e})),\ \widetilde{W}^2_1\text{ is a leaf of }\tilde\F_2 \big\}<M_2,$$
    from Corollary \ref{distleaf1}.
    \item $M$: Let $M=10\max\{M_2,D\}$.
\end{itemize}

We could choose $\alpha<\varepsilon/10M$.

We denote $\calO:=\tilde{h}(\tilde{e}_0)$ and the linear leaves of $\tilde\F_i'',\ i=1,2$ passing through $\calO$ by $\widetilde{W}^1_0,\ \widetilde{W}^2_0$. There is then a natural coordinate of every point if we consider the projection of it onto these straight leaves. For a point $\tilde e\in\R^d$, we want to denote the distance between the projection of $\tilde e$ to $\widetilde W^1_0$ and $\calO$ along $\widetilde W^1_0$ as $\|\tilde e-\calO\|_1$ and the distance between the projection of $\tilde e$ to $\widetilde W^2_0$ and $\calO$ along $\widetilde W^2_0$ as $\|\tilde e-\calO\|_2$.

We claim that for any point $\tilde{e}_1\in\widetilde{W}_2^1(\tilde{e}_0)$ and any point $\tilde{e}_2\in\widetilde{W}_2^2(\tilde{e}_0)$ of the foliations $\tilde\F_i'$,
$$\text{either}\ \|\tilde{e}_1-\tilde{e}_2\|_1>0, \ \text{or}\ \|\tilde{e}_1-\tilde{e}_2\|_2>0.$$

\noindent\textbf{Case 1.} We start with the special case when $\|\tilde{e}_1-\calO\|_1$, $\|\tilde{e}_1-\calO\|_2$, $\|\tilde{e}_2-\calO\|_1$, and $\|\tilde{e}_2-\calO\|_2$ are all $\leq M_1+\varepsilon$.

There exist points $\tilde{e}_1'\in\widetilde{W}^1_1(\tilde{e}_0)$ and $\tilde{e}_2'\in\widetilde{W}^2_1(\tilde{e}_0)$ such that $\|\tilde{e}_1-\calO\|_1=\|\tilde{e}_1'-\calO\|_1$, $\|\tilde{e}_2-\calO\|_2=\|\tilde{e}_2'-\calO\|_2$. With the $\alpha$ we chose, $\widetilde{W}^2_2(\tilde{e}_1)$ stays in a cone of slope $\leq\frac{\varepsilon}{10M}$ around the leaf $\widetilde{W}^2_1(\tilde{e}_0)$ for at least distance of $M_2$, which is the upperbound of the distance along a leaf of $\tilde\F_i$ inside a set of diameter $D>10M_1$. We then have $\|\tilde{e}_2-\tilde{e}_2'\|_1\leq 2M_2\frac{\varepsilon}{10M}<<\varepsilon/2$. Also, $\|\tilde{e}_1-\tilde{e}_1'\|_1=0$.

Note that either $\|\tilde{e}_1'-\tilde{e}_2'\|_1\geq\varepsilon$, or $\|\tilde{e}_1'-\tilde{e}_2'\|_2\geq\varepsilon$ because of the local product structure and the fact that $\tilde{W}^2_1(\tilde{e}_0)$ and $\tilde{W}^1_1(\tilde{e}_0)$ (with the GPS) do not meet the second time besides at $\tilde{e}_0$. If both $\|\tilde e'_1-\tilde e_2'\|_1<\varepsilon$ and $\|\tilde e_1'-\tilde e_2'\|_2<\varepsilon$, this means $W_1^i(\tilde e_0)$ have entered a product neighborhood where they must intersect for another time, contradicting the global product structure.

If $\|\tilde{e}_1'-\tilde{e}_2'\|_1\geq\varepsilon$, then we have
$$\|\tilde{e}_1-\tilde{e}_2\|_1\geq \|\tilde{e}_1'-\tilde{e}_2'\|_1- \|\tilde{e}_1-\tilde{e}_1'\|_1-\|\tilde{e}_2-\tilde{e}_2'\|_1 \geq \varepsilon-2M_2\frac{\varepsilon}{10M}>0.$$
Similarly, if we have $\|\tilde{e}_1'-\tilde{e}_2'\|_2\geq\varepsilon$, then $\|\tilde{e}_1-\tilde{e}_2\|_2\geq\varepsilon$.

\noindent\textbf{Case 2.} When any of $\|\tilde{e}_1-\calO\|_1$, $\|\tilde{e}_1-\calO\|_2$, $\|\tilde{e}_2-\calO\|_1$, or $\|\tilde{e}_2-\calO\|_2$ is greater than $M_1+\varepsilon$, we can prove this claim by a double induction. See Figure \ref{figgpsopen}.

A brief idea of this is that we show leaves of $\tilde\F_i'$ follow along the leaves of $\tilde\F_i$ for distance $D$ with an error $<\varepsilon$. Then we swtich to other leaves of $\tilde\F_i$ to follow, in order to keep the error of our estimate small. At the same time note that the pair of leaves of $\tilde\F_i$ is far apart, so they never meet again by GPS, hence the leaves of $\tilde\F_i'$ would not meet neither.

\begin{figure}[H]\begin{tikzpicture}[scale=.9]
    \centering
  \draw (-3,0) -- (10,0);
  \draw[gray,very thin] (-2.5,-1) -- (9.5,-1);
  \draw[gray,very thin] (-2.5,1) -- (10,1);
  \draw[gray,very thin] (-2,2) -- (10,2);
  \draw[gray,very thin] (-2,4.5) -- (10,4.5);
  \draw (-0.5,-1.5) -- (0,6);
  \draw[gray,very thin] (-1.5,-1.5) -- (-1,6);
  \draw[gray,very thin] (0.5,-1.5) -- (1,6);
  \draw[gray,very thin] (4.5,-1.5) -- (5,6);
  \draw[gray,very thin] (8.5,-1.5) -- (9,6);
  \draw plot[smooth, tension=0.7] coordinates{(-1.3,-1.1) (-0.8,0.7) (0.2,1.5) (0.88,5) (0.87,5.9)};
  \draw plot[smooth, tension=0.7] coordinates{(-2,0.5) (-0.8,0.7) (0.5,0.9) (3,0.7) (5,0.95)};
  \draw plot[smooth, tension=0.7] coordinates{(4,0.9) (4.85,1) (7,1.6) (9.5,1.9)};
  \draw[gray] plot[smooth, tension=0.7] coordinates{(-1.2,-1) (-0.8,0.7) (0.3,1.45) (0.8,5.3)};
  \draw[gray] plot[smooth, tension=0.7] coordinates{(-1.9,0.6) (-0.8,0.7) (0.5,0.8) (3,0.65) (4.63,0.95) (7,1.7) (9.2,2)};
  \node[fill=white] at (-0.3,0) {\tiny $\calO$};
  \node at (-0.8,0.7)[circle,fill,inner sep=1pt]{};
  \node[] at (-0.95,0.87) {\tiny $\tilde{e}_0$};
  \node at (4.67,0.96)[circle,fill,inner sep=1pt]{};
  \node[] at (4.6,1.2) {\tiny $\tilde{e}^{(1)}_1$};
  \node at (8.73,1.95)[circle,fill,inner sep=1pt]{};
  \node[] at (8.8,2.2) {\tiny $\tilde{e}^{(2)}_1$};
  \node at (0.74,4.5)[circle,fill,inner sep=1pt]{};
  \node[] at (0.59,4.55) {\tiny $\tilde{e}^{(1)}_2$};
  \node at (0.69,4)[circle,fill,inner sep=1pt,gray]{};
  \node[] at (0.5,4)[gray] {\tiny $\tilde{e}_2$};
  \node at (7,1.7)[circle,fill,inner sep=1pt,gray]{};
  \node[] at (6.9,1.9)[gray] {\tiny $\tilde{e}_1$};
  \draw[gray, <->] (-1.6,-1) -- (-1.53,0);
  \node at (-1.63,-0.5)[fill=white,rotate=86,font=\fontsize{4}{3.5}\selectfont]{$M_1+\varepsilon$};
  \draw[gray,<->] (-1.48,-1.15) -- (-0.49,-1.15);
  \node at (-1,-1.2)[fill=white,font=\fontsize{4}{3.5}\selectfont]{$M_1+\varepsilon$};
  \draw[gray, <->] (-0.5,1) -- (-0.27,4.5);
  \node at (-0.41,2.7)[fill=white,font=\fontsize{4.5}{3.5}\selectfont,rotate=86]{$D$};
  \draw[gray, <->] (0.58,-0.2) -- (4.59,-0.2);
  \node at (2.65,-0.2)[fill=white,font=\fontsize{4.5}{3.5}\selectfont]{$D$};
  \draw[gray, <->] (4.59,-0.2) -- (8.6,-0.2);
  \node at (6.66,-0.2)[fill=white,font=\fontsize{4.5}{3.5}\selectfont]{$D$};
  \draw[gray] (4.69,0.85) -- (4.77,0.65);
  \draw[gray] (8.75,1.8) -- (8.75,1.58);
  \draw[gray,<->] plot[smooth, tension=0.7] coordinates{(4.72,0.75) (6.7,1.4) (8.75,1.7)};
  \node at (6.73,1.29)[fill=white,font=\fontsize{4.5}{3.5}\selectfont,rotate=15]{$\leq M_2$};
  \node[] at (-0.75,-1.5){\tiny $\widetilde{W}_0^2$};
  \node[] at (-2.5,0.2){\tiny $\widetilde{W}_0^1$};
  \node[] at (0.5,5.2)[gray]{\tiny $\widetilde{W}^2_2(\tilde{e}_0)$};
  \node[] at (8,2)[gray]{\tiny $\widetilde{W}^1_2(\tilde{e}_0)$};
  \node at (1.55,5.7) {\tiny $\widetilde{W}^2_1(\tilde{e}_0)$};
  \node at (1.6,1) {\tiny $\widetilde{W}^1_1(\tilde{e}_0)$};
  \node at (9.5,1.55) {\tiny $\widetilde{W}^1_1(\tilde{e}^{(1)}_1)$};
  \draw[gray] (2.5,0.83) -- (2.7,0.825);
  \draw[gray] (2.5,0.55) -- (2.7,0.545);
  \draw[gray,dotted] (2.6,0.55) -- (2.6,0.83);
  \node at (2.6,0.7)[purple] {\tiny $\varepsilon$};
\end{tikzpicture}
\caption{Leaves} \label{figgpsopen}
\end{figure}

Here we see the necessity of the existence of the homeomorphism. If it is not injective, it may take a entire leaf to a bounded subset of the linear leaf. Then we would not have the leaves of $\tilde\F'_i$'s separating from each other.

The detailed induction is as the followings.

First for the base case, we let $\tilde e_1,\ \tilde e_2$ be points such that
$$M_1+\varepsilon<\|\tilde{e}_1-\calO\|_1\leq M_1+\varepsilon+D\ \ \text{and}\ \ \ M_1+\varepsilon<\|\tilde{e}_2-\calO\|_2\leq M_1+\varepsilon+D.$$
Note that we would not need to consider cases where $M_1+\varepsilon<\|\tilde{e}_1-\calO\|_1\leq M_1+\varepsilon+D$ and $\|\tilde{e}_2-\calO\|_2\leq M_1+\varepsilon$, or $\|\tilde{e}_1-\calO\|_1\leq M_1+\varepsilon$ and $M_1+\varepsilon<\|\tilde{e}_2-\calO\|_2\leq M_1+\varepsilon+D$, since they have been dealt with in Case 1.
For any such points $\tilde{e}_1\in \widetilde W_2^1(\tilde e_0)$ and $\tilde e_2\in \widetilde W_2^2(\tilde e_0)$, we again have $\tilde{e}_1'\in\widetilde{W}^1_1(\tilde{e}_0)$ and $\tilde{e}_2'\in\widetilde{W}^2_1(\tilde{e}_0)$, such that $\|\tilde{e}_1-\calO\|_1=\|\tilde{e}_1'-\calO\|_1$ and $\|\tilde{e}_2-\calO\|_2=\|\tilde{e}_2'-\calO\|_2$.

We have the estimate $\|\tilde{e}_1-\tilde{e}_1'\|_2<4M_2\alpha<4M_2\frac{\varepsilon}{10M}<\varepsilon$, and similarly $\|\tilde{e}_2-\tilde{e}_2'\|_1<\varepsilon$, while $\|\tilde e_1-\tilde e_1'\|_1=\|\tilde e_2-\tilde e_2'\|_2=0$. We have $\|\tilde{e}_1'-\calO\|_2\leq M_1$ because again $\tilde{h}$ has a bounded distance $M_1$ from the identity, and as a result any point on $\widetilde{W}^1_1(\tilde{e}_0)$ has a bounded distance $M_1$ from the linear leaf $\widetilde W_0^1$. In this case we always have $\|\tilde{e}_1-\calO\|_2<M_1+\varepsilon$ and $\|\tilde{e}_2-\calO\|_1<M_1+\varepsilon$ because for instance,
$$\|\tilde{e}_1-\calO\|_2\leq \|\tilde{e}_1-\tilde{e}_1'\|_2+\|\tilde{e}_1'-\calO\|_2 <M_1+\varepsilon.$$
Then
\begin{align*} \|\tilde{e}_1-\tilde{e}_2\|_1 \geq\|\tilde{e}_1-\calO\|_1-\|\tilde{e}_2-\calO\|_1>(M_1+\varepsilon)-(M_1+\varepsilon)>0.
\end{align*}

Now we denote by $\tilde{e}^{(k)}_1$ the point on $\widetilde{W}^1_2(\tilde{e}_0)$ such that $\|\tilde{e}_1^{(k)}-\calO\|_1=M_1+kD<(\frac{1}{10}+k)D$; $\tilde{e}^{(k)}_2$ the point on $\widetilde{W}^2_2(\tilde{e}_0)$ such that $\|\tilde{e}_2^{(k)}-\calO\|_2=M_1+kD<(\frac{1}{10}+k)D$, $k\geq 1$. We then take leaves $\widetilde{W}^1_1(\tilde{e}_1^{(k)})$ and $\widetilde{W}^2_1(\tilde{e}_2^{(k)})$ for each $k\geq 1$. These are the ``new leaves'' of $\tilde\F_i$ we switch to track.

The induction hypothesis: For $i\geq 1,\ j\geq 1$, and such selected $\tilde e^{(i)}_1,\ \tilde e^{(j)}_2$,
we have
$$\|\tilde e_1^{(i)}-\calO\|_2< (2i-1)(M_1+\varepsilon),\ \text{and}\ \|\tilde e_2^{(j)}-\calO\|_1< (2j-1)(M_1+\varepsilon).$$
Note that this is implicit in the base case.

Now consider the case where $i\geq 1$ and $j\geq 1$, and the points $\tilde e_1,\ \tilde e_2$ such that
$$M_1+\varepsilon+iD<\|\tilde e_1-\calO\|_1\leq M_1+\varepsilon +(i+1)D,\ \text{and}$$
$$M_1+\varepsilon+jD<\|\tilde e_2-\calO\|_2\leq M_1+\varepsilon +(j+1)D.$$

For any $\tilde{e}_1\in \widetilde W^1_2(\tilde e_0)$ and $\tilde{e}_2\in \widetilde W^2_2(\tilde e_0)$,
we could also find $\tilde{e}_1'\in\widetilde{W}^1_1(\tilde{e}_1^{(i)})$ and $\tilde{e}_2'\in\widetilde{W}^2_1(\tilde{e}_2^{(j)})$ such that $\|\tilde{e}_1-\calO\|_1=\|\tilde{e}_1'-\calO\|_1$ and $\|\tilde{e}_2-\calO\|_2=\|\tilde{e}_2'-\calO\|_2$.

Again, $\|\tilde{e}_1-\tilde{e}_1'\|_2\leq 2M_2\alpha<2M_2\frac{\varepsilon}{10M}<\varepsilon$ and $\|\tilde{e}_2-\tilde{e}_2'\|_1 <\varepsilon$. Here we have
\begin{align*}
\|\tilde{e}_1-\calO\|_2 &\leq\|\tilde{e}_1-\tilde{e}_1'\|_2+\|\tilde{e}_1'-\tilde{e}_1^{(i)}\|_2+\|\tilde{e}_1^{(i)}-\calO\|_2\\
&\leq \varepsilon+2M_1+(2i-1)(M_1+\varepsilon)\\
&< (2i+1)(M_1+\varepsilon),
\end{align*}
where $\|\tilde e_1'-\tilde e_1^{(i)}\|_2\leq 2M_1$ because every point on the leaf $\widetilde W_1^1(\tilde e_1^{(i)})$ is of bounded distance $<M_1$ from a linear leaf passing through $h(\tilde e_1^{(1)})$ and then $\tilde e'_1$ can only be in a cylinder of radius $M_1$ that contains this linear leaf.
Similarly, we have $\|\tilde{e}_2-\calO\|_1< (2j+1)(M_1+\varepsilon)$. 

If $i\geq j$, then
\begin{align*} \|\tilde{e}_1-\tilde{e}_2\|_1 &\geq\|\tilde{e}_1-\calO\|_1-\|\tilde{e}_2-\calO\|_1 \\
& >\|\tilde{e}_1^{(i)}-\calO\|_1-\|\tilde{e}_2-\calO\|_1\\
& \geq iD-(2j+1)(M_1+\varepsilon)\\
&\geq (10i-(2j+1))(M_1+\varepsilon)>0.
\end{align*}
If $i\leq j$, then $\|\tilde{e}_1-\tilde{e}_2\|_2>0$.
\end{proof}

\begin{corollary}\label{gpsopen} For our fibrewise Anosov diffeomorphism $F:E\to E$, suppose at $b_0\in B$ we have the global product structure. Then there exists a $\delta$ (uniform for all $b_0\in B$) such that if $d_B(b_0,b)<\delta$, then we also have the global product structure at $b$.
\end{corollary}
\begin{proof} We apply the above proposition, where $\F_i,\ i=1,2$ is the pair of the stable and unsatble foliations of $F_{b_0}$ and $\F_i',\ i=1,2$ is the stable and unstable foliations of $F_{b}$. We get an $\alpha$.

Because of the smoothness of $F$ and thus of its induced foliations and the compactness of the bundle, we are able to pick a $\delta>0$ such that if $d(b,b_0)<\delta$ with $b,b_0$ in the same trivialized neighborhood, then at any point $e\in p^{-1}b_0$ and $e'\in p^{-1}b$ such that $\psi(e)=\psi(e')$, we have $\max \angle_e(p_i(e),d\psi(p_i'(e)))<\alpha$, where $\{p_i\}_{i=1}^d$ and $\{p_i'\}_{i=1}^d$ are bases of the tangent space of $p^{-1}b_0$ and $p^{-1}b$ respectively. Also, $p_i,\ p_i'$ are bases elements of the stable bundle if $i=1,\ldots, l$ and are bases elements of the unstable bundle if $i=l+1,\ldots,d$.
\end{proof}

\begin{corollary}\label{gpsclosed} Suppose $\{b_n\}$ is a sequence of points in the base $B$ that converges to $b_0$, where at each $b_n$ the stable and unstable foliations of $F$ have the global product structure. Then at $b_0$ we also have the global product structure.
\end{corollary}
\begin{proof} This also follows from Corollary \ref{gpsopen}, because for that fixed $\delta$, we could find an $N$ such that $d(b_0,b_N)<\delta$.
\end{proof}

We prove Theorem \ref{main} by showing that the $h$ itself is a homeomorphism.

\begin{proof}[Proof of Theorem \ref{main}] 
The set
$$\{b\in B\ |\ \text{The foliations induced by $F_b$ has the global product structure}\}$$
is nonempty by Lemma \ref{recgps}, is open by Corollary \ref{gpsopen}, and is closed by Corollary \ref{gpsclosed}. Hence  this set is the full set of $B$. By Lemma \ref{hinj}, $\tilde{h}_b$ is injective for all $b\in B$.

We know $h$ is surjective from Proposition \ref{semii}. If we take $e,e'$ such that $h(e)=h(e')$, then $e$ and $e'$ must be in the same fibre. Let $b:=p(e)=p(e')$. Consider the lifts to $\R^d_b$. Then there is a deck transformation $j\in\Z^d$, such that $\tilde{h}_b(\tilde{e})=\tilde{h}_b(\tilde{e}')+j=\tilde{h}_b(\tilde{e}'+j)$, so $\tilde{e}=\tilde{e}'+j$. Thus $e=e'$. Therefore $h$ is also injective.
\end{proof}

\

\noindent\textbf{Acknowledgements.} I would like to thank my advisor Andrey Gogolev for suggesting this problem, many discussions, and carefully reading my writing.

\

\nocite{*}
\bibliographystyle{alpha}
\bibliography{bibfile}

\

\address{\parbox{\linewidth}{
  Department of Mathematics - The Ohio State University,\\
  100 Math Tower, 231 West 18th Avenue, Columbus, OH 43210-1174, USA}}
\email{zhang.8939@osu.edu}



\end{document}